\newcommand{\mnote}[1]{$\ll$\textsf{#1 --Manor}$\gg$\marginpar{\tiny\bf MM}}
\DeclareMathOperator{\dist}{dist}
\DeclareMathOperator*{\Avg}{\mathbb E}
\DeclareMathOperator{\lca}{lca}
\newcommand{\Lip}{\mathrm{Lip}}
\newcommand{\e}{\varepsilon}
\newcommand{\Matousek}{Matou\v{s}ek}
\newcommand{\X}{\mathcal{X}}
\newcommand{\Y}{\mathcal{Y}}
\newcommand{\HH}{\mathcal{H}}
\newcommand{\MET}{\mathcal{MET}}
\newcommand{\tb}{{|\hspace{-0.9pt}|\hspace{-0.9pt}|}}
\newtheorem{theorem}{Theorem}
\newtheorem{lemma}{Lemma}
\newtheorem{proposition}[lemma]{Proposition}
\theoremstyle{remark}
\newtheorem{definition}{Definition}
\newtheorem{remark}{Remark}
\newtheorem{question}{Question}
\begin{document}
\title{Metric Dichotomies}
\author{Manor Mendel}
\address{The Open University of Israel}
\email{mendelma@gmail.com}

\begin{abstract}
These are notes from talks given at  ICMS, Edinburgh, 4/2007 (``Geometry and Algorithms  workshop") and
at Bernoulli Center, Lausanne 5/2007 (``Limits of graphs in group theory and computer science").
We survey the following type of dichotomies exhibited by certain classes $\X$ of finite metric spaces: For every host space $H$, either all metrics in $\X$ embed almost isometrically in $H$, or the distortion of embedding some metrics of $\X$ in $H$ is unbounded.
\end{abstract}

\maketitle

\section{Problem statement and motivation}
\label{sec:intro}

In these notes we examine dichotomy phenomena exhibited by certain classes  $\X$ of finite metric spaces. When attempting to embed the metrics in $\X$ in any given host spaces $H$, either all of them embed almost isometrically, or there are some metrics in $\X$ which are very poorly embedded in $H$. To make this statement precise we define the distortion of metric embeddings.

Given a mapping between metric spaces $f:X\to H$, define the Lipschitz norm of $f$ to be $\|f\|_{\Lip}=\sup_{x\ne y} d_H(f(x),f(y))/d_X(x,y)$. The distortion of injective mapping $f$ is defined as 
$\dist(f)=\|f\|_{\Lip} \cdot \|f^{-1}\|_{\Lip}$, where 
$f^{-1}$ is defined on $f(X)$.
The ``least distortion" in which $X$ can be embedded in $H$ is defined as
 $c_H(X)=\inf\{\dist(f)|\; f:X\to H\}$. This is a measure of the faithfulness
 possible when representing $X$ using a subset of $H$. 

We formalize the discussion above as follows:
\begin{definition}[\textbf{Qualitative Dichotomy}]
 \label{def:qual-dichotomy}
A class of \emph{finite} metric spaces $\mathcal{X}$ has the qualitative dichotomy property if
for any host space $H$, either 
\begin{compactitem}
\item $\sup_{X\in\X} c_{H}(X)=1$; or
\item $\sup_{X\in\X}c_{H}(X)=\infty$.
\end{compactitem}
\end{definition}

\begin{remark}
As defined in Def.~\ref{def:qual-dichotomy}, the dichotomy is with respect to all metric spaces as hosts.
It is possible to extend the definition 
to be with respect to all \emph{sets of metric spaces} as hosts.  That is, for a set of metric spaces $\HH$, define
$c_{\HH}(X)=\inf _{H\in \HH} c_H(X)$, and replace the use of ``$c_H(X)$" in Def.~\ref{def:qual-dichotomy} with
``$c_{\HH}(X)$".
This extension, however, is inconsequential and the two definitions are equivalent. This follows from \emph{the proof} of Theorem~1.6 in~\cite{MN-cotype-full}, which implies that for any set of metric spaces $\HH$, there exists a metric $\hat H$, such that for any finite metric space $X$, $c_{\hat H}(X)=c_{\HH}(X)$.
\end{remark}

A dichotomy theorem for $\mathcal X$ can be interpreted as a 
form of rigidity of $\mathcal X$: Small deformations of all the spaces in $\mathcal X$ is impossible.

We will also be interested in stronger dichotomies --- of a quantitative nature --- in which the
unboundedness condition of the  
distortion is replaced with quantitative estimates on the rate in which it tends to infinity as a function of the size of the metric space.
I.e., by asymptotic lower bounds on the sequence 
\[ D_N(H, \X)=\sup\{c_{H}(X):\ X\in \X, \ |X|\le N\}. \]

The question of identifying such dichotomies was first explicitly raised  by Arora \emph{et.~al.}~\cite{ALNRRV}. 
They were motivated by a question from the theory of combinatorial approximation algorithms, where
bounded distortion embeddings have become a basic tool.
When dealing with algorithmically hard problem on a metric data
$X\in \X$, some algorithms first embed  $X$ into
a better understood metric space $H$, $e:X\to H$, and then solve the algorithmic problem on $e(X)$. 
This approach is used, for example,
in~\cite{LLR,Bartal-embed,GKR,Feige,FRT}.
For this approach to work:
\begin{compactenum}
\item  $H$ should be simple enough to make the algorithmic problem tractable.
\item $e(X)$ should be close to $X$.  
\end{compactenum}
Metric dichotomies draw limits on this approach when ``closeness" is measured in terms of the distortion. Dichotomy means that either $H$ already (essentially) contains $\X$, and therefore cannot be understood better than $\X$, or $H$ does not approximate some metrics in $\X$ very well. The algorithmic point of view also motivates the interest in 
quantitative dichotomies: When dealing with finite objects, slowly growing approximation ratios
are also useful, and can be ruled out by quantitative dichotomies.

{\Matousek}~\cite{Mat-BD} studied a closely related notion, which he called 
\emph{bounded distortion (bd-) Ramsey}. 
Simplifying his definitions a bit, a class of finite metric space $\X$ is called bd-Ramsey,
If for every $K>1$, $\e>0$, and $X\in\X$, there exists $Y\in\X$ such that
for any host space $H$, and any embedding $f:Y\to H$, if $\dist(f)\le K$,
then there exists $g:X\to Y$ such that 
$\dist(g)\le 1+\e$, and $\dist(f|_{g(X)})\le 1+\e$.

As observed in~\cite{ALNRRV}, the bd-Ramsey property implies 
qualitative dichotomy. 
\begin{proposition}
If a class of finite metric spaces $\X$ is bd-Ramsey
then it has the qualitative dichotomy property.
\end{proposition}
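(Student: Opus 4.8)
The plan is to prove the first alternative under the negation of the second. Fix an arbitrary host space $H$ and suppose $\sup_{X\in\X} c_H(X)<\infty$; I will show that this supremum is in fact equal to $1$. Write $K_0=\sup_{X\in\X} c_H(X)$, a finite real number with $K_0\ge 1$, and fix once and for all some $K>K_0$ (in particular $K>1$, which is what the bd-Ramsey hypothesis requires of its parameter).

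Now fix an arbitrary $X\in\X$ and an arbitrary $\e>0$; the goal is to bound $c_H(X)$. Apply the bd-Ramsey property to the triple $(K,\e,X)$ to obtain a space $Y\in\X$ with the stated universal property. Since $c_H(Y)\le K_0<K$ and $c_H(Y)$ is an infimum of distortions, there is an embedding $f\colon Y\to H$ with $\dist(f)\le K$. The bd-Ramsey conclusion then supplies a map $g\colon X\to Y$ with $\dist(g)\le 1+\e$ and $\dist\big(f|_{g(X)}\big)\le 1+\e$.

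The last step is the submultiplicativity of distortion under composition: the injection $f\circ g\colon X\to H$ satisfies $\dist(f\circ g)\le \dist(g)\cdot\dist\big(f|_{g(X)}\big)\le (1+\e)^2$, since $\|f\circ g\|_{\Lip}\le\|f|_{g(X)}\|_{\Lip}\cdot\|g\|_{\Lip}$ and likewise for the inverses on the relevant images. Hence $c_H(X)\le (1+\e)^2$. As $X\in\X$ and $\e>0$ were arbitrary, $\sup_{X\in\X}c_H(X)\le (1+\e)^2$ for every $\e>0$, so $\sup_{X\in\X}c_H(X)\le 1$; combined with the trivial bound $c_H(X)\ge 1$ this yields $\sup_{X\in\X}c_H(X)=1$, which is exactly the first branch of the dichotomy.

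There is no real obstacle here; the proposition is essentially an unwinding of the two definitions. The only points needing care are (i) choosing $K$ \emph{strictly} above $K_0$, so that every $Y\in\X$ genuinely admits an embedding into $H$ of distortion at most $K$ and the bd-Ramsey hypothesis is applicable; and (ii) the quantifier order, namely that the auxiliary space $Y$ depends on both $X$ and $\e$ — this is harmless, since for each fixed $\e$ we already obtain $c_H(X)\le (1+\e)^2$ for all $X\in\X$, and only then let $\e\to 0$. One could sharpen $(1+\e)^2$ to $1+\e'$ by passing a smaller parameter to the bd-Ramsey property, but this is immaterial after taking the limit.
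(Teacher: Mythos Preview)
Your proof is correct and follows essentially the same argument as the paper: assume the supremum is finite, choose $K$ strictly above it, and for each $X$ and $\e$ use the bd-Ramsey property to produce $Y$ and then compose $g$ with $f|_{g(X)}$ to bound $c_H(X)$ by $(1+\e)^2$ (the paper writes $1+3\e$ after restricting to $\e<1/2$). The only differences are cosmetic---your choice of $K>K_0$ versus the paper's $K=1+\sup_{Y\in\X}c_H(Y)$, and your more explicit justification of the composition step.
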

\begin{proof}
Fix a host space $H$, and suppose that 
\begin{equation} \label{eq:bounded}
\sup_{Y\in\X}c_H(Y)<\infty.
\end{equation}
Fix $X\in \X$, and $\e\in (0,1/2)$, and let $K=1+ \sup_{Y\in\X}c_H(Y)$. Pick
$Y\in\X$ that satisfies the bd-Ramsey condition. 
By~\eqref{eq:bounded}, there exists $f:Y\to H$ such that $\dist(f)\le K$.
By the bd-Ramsey property, there exists $g:X\to Y$ such that
$\dist(g)\le 1+\e$, and $\dist(f|_{g(X)})\le 1+\e$, and so $c_H(X)\le
\dist(g)\cdot \dist(f|_{g(X)}) \le 1+3\e$. Since this is true for any $\e\in(0,1/2)$, we conclude that $c_H(X)=1$. As this is true for any $X\in\X$, we conclude that $\sup_{X\in\X}c_H(X)=1$.
\end{proof}

\begin{remark}
All the dichotomies results in this note 
are actually bd-Ramsey results. 
\end{remark}

{\Matousek}'s study of bd-Ramsey phenomena~\cite{Mat-BD} is partially motivated by a general theme in the geometric theory of Banach spaces to translate notions and results from the linear theory of finite dimensional Banach spaces to finite metric spaces. 
One such example is a theorem of Maurey, Pisier, and Krivine~\cite{MP-type-cotype,Krivine} 
(see also~\cite{Maurey-survey} and~\cite[Ch.~12]{BL})  which implies that if a normed space $H$ contains linear images of $\ell_p^n$ for any $n$ with uniformly bounded distortion, then $H$ contains linear images of $\ell_p^n$ for any $n$ almost isometrically. More precisely, For every $t\in \mathbb N$, $\e>0$, $K\ge 1$, and $p\in [1,\infty]$, there exists $n=n(t,\e\,K,p)$ such that if there exists a linear mapping $f:\ell_p^n \to H$, with $\dist(f)\le K$, then there exists a linear mapping $g:\ell_p^t \to \ell_p^n$ such that both $\dist(g)\le 1+\e$, and 
$\dist(f|_{g(\ell_p^t)})\le 1+\e$. The bd-Ramsey property is a similar property, without the linear structure.

\medskip
When studying metric dichotomy for a given class $\X$ of metric spaces, it is beneficial to work with a structured dense subclass $\Y\subset \X$.

\begin{proposition} \label{prop:dense}
Suppose that $\Y \subset \X$ and $\Y$ is dense in $\X$, i.e., for every
$X\in\X$, $c_\Y(X)=1$. Then if $\Y$ has a metric dichotomy (either qualitative or quantitative) then $\X$ has the same dichotomy.
\end{proposition}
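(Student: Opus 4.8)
The plan is to isolate two elementary facts about the functionals $c_H(\cdot)$ and $D_N(H,\cdot)$ that hold for \emph{any} host $H$, with no reference to a dichotomy, and to invoke the dichotomy hypothesis for $\Y$ only at the very end. Fix a host space $H$ throughout. The first fact is pure monotonicity: since $\Y\subseteq\X$, we have $\{Y\in\Y:|Y|\le N\}\subseteq\{X\in\X:|X|\le N\}$ for every $N$, so $D_N(H,\Y)\le D_N(H,\X)$, and in particular $\sup_{Y\in\Y}c_H(Y)\le\sup_{X\in\X}c_H(X)$. This already transfers the ``bad'' branch of either dichotomy from $\Y$ to $\X$: unboundedness of $\sup_Y c_H(Y)$, or a lower bound $D_N(H,\Y)\ge\phi(N)$, passes to $\X$ verbatim, with the same rate $\phi$.

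The second fact is the converse of the supremum statement: $\sup_{Y\in\Y}c_H(Y)=1$ implies $\sup_{X\in\X}c_H(X)=1$. To see it, fix $X\in\X$ and $\e\in(0,1)$. Density ($c_\Y(X)=1$) supplies $Y\in\Y$ and an injective $g:X\to Y$ with $\dist(g)\le 1+\e$; and since $1\le c_H(Y)\le\sup_{Y'\in\Y}c_H(Y')=1$, there is an injective $f:Y\to H$ with $\dist(f)\le 1+\e$. Distortion is submultiplicative under composition of injections (because $\|f\circ g\|_{\Lip}\le\|f\|_{\Lip}\,\|g\|_{\Lip}$ and $(f\circ g)^{-1}=g^{-1}\circ f^{-1}$), so $f\circ g:X\to H$ is injective with $\dist(f\circ g)\le\dist(f)\,\dist(g)\le(1+\e)^2$. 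Letting $\e\to0$ yields $c_H(X)=1$; as $X\in\X$ was arbitrary, $\sup_{X\in\X}c_H(X)=1$.

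To conclude, apply the dichotomy of $\Y$ to the fixed $H$. If its good branch holds, then $\sup_{Y}c_H(Y)=1$, and the second fact upgrades this to $\sup_{X}c_H(X)=1$; note the good alternative is the \emph{same} statement in the qualitative and the quantitative versions, so the argument is unchanged there. If its bad branch holds, the first fact pushes the (qualitative or quantitative) estimate onto $\X$. Since $H$ was arbitrary, $\X$ has the same dichotomy. I do not anticipate a genuine obstacle; the only point to watch is that density provides no control on $|Y|$ relative to $|X|$ --- but this is harmless, since the bad branch never requires passing from $\X$ to $\Y$, and in the good branch the bound $c_H(Y)=1$ holds for \emph{every} $Y\in\Y$ regardless of cardinality.
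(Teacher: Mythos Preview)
Your proof is correct and follows essentially the same approach as the paper: monotonicity of $D_N$ under $\Y\subseteq\X$ for the bad branch, and submultiplicativity of distortion combined with density for the good branch. The paper compresses the good-branch argument into the single inequality $\sup_{X\in\X}c_H(X)\le\sup_{Y\in\Y}c_H(Y)\cdot\sup_{X\in\X}c_\Y(X)=1$, which is exactly what your explicit composition $f\circ g$ establishes.
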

\begin{proof} Since $\Y\subset \X$, for any host space $H$, 
$D_N(H,\Y)\ge D_N(H,\X)$. On the other hand, if $\sup_{Y\in\Y}c_H(Y)=1$, then
\( 1\le \sup_{X\in\X}c_H(X) \le \sup_{Y\in\Y}c_H(Y) \cdot \sup_{X\in\X}c_\Y(X)= 1 .  \)
\end{proof}

Table~\ref{tab:classes} lists the classes of finite metric spaces which we will deal with and their regular dense subclasses. The proofs of the
density are standard.

\begin{table}[ht]
\begin{center}
\begin{tabular}{l|ll}
Metric class &  Dense structured  & Shorthand\\ 
(Finite subsets of) &  subclass ($n\in\mathbb N$) & ($n\in\mathbb N$) \\ \hline
$\mathbb R$  & $\{0,\ldots,n\}$ & $P_n$\\
$L_1$  &  $(\{0,1\}^n,\|\cdot\|_1)$ & $\{0,1\}^n$ \\
$L_\infty$ ~ (i.e., $\MET$) & $(\{1,\ldots,n\}^n,\|\cdot\|_\infty)$ & $[n]^n_\infty$ \\
tree metrics & $(\{0,1\}^{\le n}, \text{tree distance})$ & $B_n$ \\ \hline
\end{tabular}
\end{center}
\caption{The classes of metric spaces considered in these notes, and their dense regular subclasses used in the proofs. $\MET$ is the class of all finite metric spaces. $\{0,1\}^{\le n}$ is the set of binary strings of length at most $n$. The tree distance on binary strings
$x,y\in \{0,1\}^{\le n}$ is defined as $|x|+|y|- 2 |\mathrm{lcp}(x,y)|$,
where $|x|$ is the length of $x$, and $\mathrm{lcp}(x,y)$ is the longest common prefix of $x$ and $y$.}
\label{tab:classes}
\end{table}

\section{Qualitative dichotomies} \label{sec:qualitative}

\begin{theorem}~\cite{Mat-BD}  \label{thm:mat-bd} 
The following classes of finite metric spaces have the qualitative dichotomy property:
\begin{enumerate}[1.]
\item Finite subsets of $\mathbb R$,
\item For any $p\in [1,\infty]$, the class of finite subsets of $L_p$.
\item Finite equilateral spaces.
\end{enumerate} 
\end{theorem}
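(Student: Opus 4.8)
I would prove the stronger fact that each of the three classes is bd-Ramsey; since bd-Ramsey implies the qualitative dichotomy property (as shown above) and, by Proposition~\ref{prop:dense}, it suffices to prove it for a dense structured subclass, this reduces to the subclasses of Table~\ref{tab:classes}: the paths $P_n$ for finite subsets of $\mathbb R$; the Hamming cubes $(\{0,1\}^n,\|\cdot\|_1)$ for $L_1$; the grids $[n]^n_\infty$ for $L_\infty$ --- note that the finite subsets of $L_\infty$ are, by Fr\'echet's embedding, all of $\MET$; a fine net in a large cube of $\ell_p^n$ for $L_p$ with $1<p<\infty$; and the equilateral spaces $E_n$ ($n$ points at mutual distance $1$) for finite equilateral spaces. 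Item~1 concerns the paths, and the analysis of paths turns out to be the core of the $L_p$ cases as well. In all cases we rescale so that the given $f\colon Y\to H$ satisfies $\|f\|_{\Lip}=1$ and $\|f^{-1}\|_{\Lip}\le K$, and the goal becomes to locate in $Y$ a sub-configuration which is a rescaled $(1+\e)$-distortion copy of the target $X$ (this produces $g$ with $\dist(g)\le 1+\e$) and on which $f$ is itself a $(1+\e)$-embedding (this produces $\dist(f|_{g(X)})\le 1+\e$).

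The equilateral case is the model and is pure Ramsey theory: partition $[1,K]$ into $O(\log_{1+\e}K)$ geometric blocks, colour each pair of points of $E_m$ by the block holding the distance between its $f$-images, and invoke Ramsey's theorem --- once $m$ is large in terms of $t$ and the number of blocks there is a monochromatic $t$-clique, i.e.\ $t$ points whose mutual $f$-distances lie in one factor-$(1+\e)$ window. Since every $t$-point subset of $E_m$ is isometric to $E_t$, this clique is at once an isometric copy of $E_t$ in $E_m$ and a set on which $f$ has distortion at most $1+\e$.

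For finite subsets of $\mathbb R$ the target is a path $P_t$ and the sub-configuration wanted is a near-arithmetic progression $b_0<\ldots<b_t$ in $P_m$ whose $f$-image is near-isometric; the latter breaks into (i) the consecutive hops $d_H(f(b_i),f(b_{i+1}))$ being nearly equal, and (ii) the path $f(b_0),\ldots,f(b_t)$ being nearly geodesic. I would get (ii) by a dyadic self-improvement: with $x_i=f(i)$ and $S_k=\sum_j d_H(x_{j2^k},x_{(j+1)2^k})$ the total $f$-length of $P_m$ at scale $2^k$, the triangle inequality makes $k\mapsto S_k$ nonincreasing while the distortion bound confines $S_0/S_{\log_2 m}\le K$; hence among the $\log_2 m$ scales there is a long run over which $S_k$ barely drops, so the merging defects at those scales are tiny on average, and a union bound over the $O(\log t)$ levels and $O(t)$ intervals composing one dyadic block of length $t\cdot 2^{k^*}$ yields such a block on which the scale-$2^{k^*}$ sub-path is $(1+\e)$-geodesic --- one needs the relative defect below $\asymp 1/(Kt)$, at which near-geodesy of the endpoints forces near-isometry of \emph{all} intermediate pairs, and this only enlarges $m$. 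Property (i), together with the near-equal spacing of the $b_i$ inside $P_m$, needs a further regularisation: a partition-theoretic step of van der Waerden type extracting from the geodesic sub-path a progression on which the partial-length function is nearly affine. For $1<p<\infty$ one first passes to a net and argues coordinate by coordinate; for $L_\infty$, i.e.\ all finite metric spaces, one works in $[n]^n_\infty$ and replaces van der Waerden by a Hales--Jewett type theorem (equivalently multidimensional van der Waerden) to pull out a near-isometric sub-grid, again after a geometric bucketing of pairwise distances has regularised the metric.

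The chief obstacle is the tension between these two devices. A Ramsey-type colouring hands back a sub-configuration on which $f$ behaves well but whose position in $Y$ is combinatorially arbitrary, so the inclusion $g$ onto it need not be near-isometric; a van der Waerden or Hales--Jewett type theorem hands back a sub-configuration of precisely the right combinatorial shape in $Y$ but with no control over $f$. The technical heart is to interleave the two so that one extracted configuration is simultaneously a rescaled copy of $X$ in $Y$ and $f$-near-isometric, while the errors accumulated through the iterated near-geodesy estimates remain below the threshold that turns control of the endpoints into control of all pairs rather than merely of consecutive ones. This is precisely why descending to the structured dense subclasses of Table~\ref{tab:classes} pays off: in paths, cubes and grids the pertinent well-spacedness is governed by the classical partition theorems, which can be applied after --- and only after --- the Ramsey bucketing has regularised the metric.
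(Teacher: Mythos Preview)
Your plan for items~1 and~3 is sound and essentially matches Matou\v{s}ek's own arguments: classical Ramsey with geometric bucketing disposes of equilateral spaces, and the dyadic self-improvement for paths is the same mechanism that drives Lemma~\ref{lem:path-ineq} in Section~\ref{sec:line} (the sub-multiplicativity of $\Psi_n$ is your ``$S_k$ barely drops over a long run'' in disguise).

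For item~2, however, your route diverges sharply from the paper's and leaves a genuine gap. The paper does \emph{not} attack general $L_p$ combinatorially. It linearizes: a Rado-type compactness argument produces a Lipschitz map $\hat f:\ell_p^n\to\hat H$ defined on all of $\ell_p^n$; Kirchheim's metric differentiation theorem then yields a point $x_0$ near which $\hat f$ pulls back the metric of $\hat H$ to a genuine norm $\tb\cdot\tb$ on $\mathbb R^n$ at Banach--Mazur distance at most $K$ from $\ell_p^n$; finally the \emph{linear} Maurey--Pisier--Krivine theorem, applied to $\tb\cdot\tb$, locates a $t$-dimensional subspace on which $\tb\cdot\tb$ is $(1+\e)$-equivalent to $\ell_p^t$. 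Partition theorems never enter; the heavy lifting is delegated to the linear theory, which is exactly what differentiation buys. Your proposed substitute --- ``pass to a net and argue coordinate by coordinate'' for $1<p<\infty$ --- does not close: the $\ell_p$ metric mixes coordinates nonlinearly, so regularising each coordinate separately does not produce a near-isometric copy of a smaller $\ell_p^t$-net, and there is no van~der~Waerden-type theorem that outputs $\ell_p$-homothets. Even for $L_\infty$, bucketing pairwise $f$-distances and applying Hales--Jewett or Gallai gives a sub-grid on which those distances lie in a single bucket, i.e.\ $f$ is near-\emph{equilateral} there, not near-isometric to the grid; the paper's own argument for $L_\infty$ (Section~4) again avoids this via the sub-multiplicative inequality~$\Gamma_n$, not partition Ramsey. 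In short, the ``interleaving'' you correctly identify as the technical heart has no known combinatorial implementation for $p\notin\{1\}$, and the paper sidesteps it entirely by differentiating and invoking the linear theory.
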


Here we just outline the proof for finite subsets of $L_p$, which is a nice demonstration of the linearization technique for Lipschitz mappings of normed spaces: Given a Lipschitz map, find a point of differentiability.
The differential is a linear map with the same Lipschitz norm. Now apply a result from the linear theory. 
In our case, the linear result is the Maurey, Pisier, and Krivine theorem, and the differentiability argument
is due to Kirchheim~\cite{Kirchheim}.

\begin{proof}[Sketch of a proof of Theorem.~\ref{thm:mat-bd}, item 2]
Fix a host space $H$, and $p\ge 1$, and assume that there exists $K\in [1,\infty)$
such that any finite subset of $S\subset L_p$ embeds in $H$, $f_S:S\to H$, and  $\dist(f_S)\le K$.
We fix a finite $S\subset L_p$, $\e>0$ and want to prove that $c_H(S)\le 1+\e$. It is known (see~\cite[Sec.~11.2]{DZ}) 
that $S$ can be isometrically embedded in $\ell_p^t$, for $t=\binom{|S|}{2}$. Let $n=n(p,t,K,\e)$ be chosen as in the Maurey-Pisier+Krivine theorem discussed in Section~\ref{sec:intro}.

The argument of the proof goes roughly as follows: Use a compactness argument to conclude that there exists an
embedding $\hat f:\ell_p^n \to H$ whose distortion at most $K$. Since this embedding is in particular Lipschitz, use a differentiation argument to find a point of ``differentiability". The differential is a ``linear mapping" of $\ell_p^n$ whose distortion is at most $K$,
and thus by the Maurey-Pisier+Krivine theorem alluded to above, there exists a $t$-dimensional subspace of 
$\ell_p^n$ which is almost isometric to $\ell_p^t$ for which that mapping is almost isometry into $H$. This would finish the proof, since $S$ is isometrically embeddable in $\ell_p^t$.

The above argument has two major difficulties: 
\begin{inparaenum}[1)]
\item Since $H$ is not compact, the required ``compactness argument" is false.
\item  The notions of linear mapping and derivative when the target is general metric space, are not clear.
\end{inparaenum}

The first difficulty is addressed using  a compactness argument, similar to Rado's Lemma (see \cite[Lemmas~3.4, \&~4.4]{Mat-BD}), which implies
there exists a metric space $\hat H$, and an embedding 
$\hat f:\ell_p^n\to \hat H$ such that $\dist(\hat f)\le K$, and 
moreover, for any finite $T\subset \ell_p^n$, and $\delta>0$, there exists $R$, $T\subset R\subset \ell_p^n$ such that $\hat f(R)$ distorts the distance in $f_R(R)$ by at most a factor of $1+\e$. 

The second difficulty is addressed using
a metric differentiation theorem of Kirchheim~\cite{Kirchheim}. It implies that 
there exists $x_0\in \mathbb R^n$, and a pseudo-norm on $\tb\cdot \tb$ on $\mathbb R^n$ 
such that for every $h,k\in \ell_p^n$, 
$d_H(\hat f(x_0+h),\hat f(x_0+k))= \tb h-k\tb +o(\|h\|_p+\|k\|_p)$. We conclude that $\tb \cdot \tb $ is a norm on $\mathbb R^n$ whose Banach-Mazur distance from the $\ell_p$ norm is at most $K$. 
Furthermore on a ball $B$ small enough around $x_0$ in $\ell_p^n$, $\tb \cdot \tb $ is $1+\e$ approximation to the metric on $\hat f(B)$.

Hence, by translating and rescaling $S$ we can assume it is inside $B$, and thus we can view $\hat f$ as an approximate mapping between $\ell_p^n$ and $(\mathbb R^n,\tb \cdot \tb)$. At this point Maurey-Pisier+Krivine
theorem can be applied rigorously.
\end{proof}

\medskip
Finite subsets of finite (but larger than one) dimensional normed space is a natural class of metric spaces that does not have the dichotomy property:
\begin{proposition}
Fix $d>1$, and  some norm, $\|\cdot \|$, on $\mathbb R^d$. Then the class of finite subsets of $(\mathbb R^d, \|\cdot\|)$ does not have qualitative dichotomy.
\end{proposition}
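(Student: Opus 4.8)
The plan is to exhibit, for the class $\X$ of finite subsets of $X_d:=(\mathbb R^d,\|\cdot\|)$, a single host space $H$ with $1<\sup_{X\in\X}c_H(X)<\infty$, so that neither alternative of Definition~\ref{def:qual-dichotomy} holds. Since $d>1$, not every $d$-dimensional normed space is linearly isometric to $X_d$ --- already the spaces $\ell_p^d$, $p\in[1,\infty]$, realize infinitely many isometry types --- so I would fix a $d$-dimensional normed space $H$ that is \emph{not} linearly isometric to $X_d$ and claim it is the desired witness.

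For the upper bound I would just use that all norms on $\mathbb R^d$ are equivalent; to get the clean constant, John's theorem gives $d_{\mathrm{BM}}(X_d,H)\le d_{\mathrm{BM}}(X_d,\ell_2^d)\cdot d_{\mathrm{BM}}(\ell_2^d,H)\le\sqrt d\cdot\sqrt d=d$, so fixing a linear isomorphism $T\colon X_d\to H$ that nearly realizes this distance and restricting it to any finite $X\subseteq X_d$ yields $\dist(T|_X)\le\dist(T)\le d$, hence $c_H(X)\le d$ and $\sup_{X\in\X}c_H(X)\le d<\infty$.

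The heart of the matter is the strict inequality $\sup_{X\in\X}c_H(X)>1$, which I would prove by contradiction: were the supremum equal to $1$, then every finite subset of $X_d$ would embed into $H$ with distortion arbitrarily close to $1$, and I would upgrade this to an isometric embedding of \emph{all} of $X_d$ into $H$. Fix a nested sequence of finite sets $\{0\}=F_0\subseteq F_1\subseteq F_2\subseteq\cdots\subseteq X_d$ with dense union (for instance $F_k=(2^{-k}\mathbb Z^d)\cap\{x:\|x\|\le k\}$), pick $g_k\colon F_k\to H$ with $\dist(g_k)\to1$, and normalize each $g_k$ so that $\|g_k\|_{\Lip}=1$ and $g_k(0)=0$; then $\|g_k(x)\|_H\le\|x\|$ for every $x\in F_k$, so the $g_k$ are uniformly bounded at each fixed point, and since $H$ is finite-dimensional a diagonal argument extracts a subsequence converging pointwise on $\bigcup_kF_k$ to a limit $g$. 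Passing to the limit in the two-sided estimates $(1+\e_k)^{-1}\|x-y\|\le\|g_k(x)-g_k(y)\|_H\le\|x-y\|$ shows that $g$ is an isometric embedding of the dense set $\bigcup_kF_k$ into the complete space $H$, hence extends to an isometric embedding $\bar g\colon X_d\to H$. Now $\bar g(X_d)$ is open in $H$ by invariance of domain and closed in $H$ because it is a complete subspace, hence equals the connected space $H$; so $\bar g$ is a \emph{surjective} isometry between normed spaces, and by the Mazur--Ulam theorem it is affine, forcing $X_d$ to be linearly isometric to $H$ --- contradicting the choice of $H$.

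I expect the main obstacle to be this last paragraph: one must set up the normalization carefully (the combination $g_k(0)=0$ and $\|g_k\|_{\Lip}=1$ is exactly what confines the maps to a fixed compact subset of $H$, making diagonalization legitimate), and the surjectivity step, while soft, is easy to overlook. If one prefers to sidestep invariance of domain, it suffices to additionally require $H$ to be strictly convex --- still possible while keeping $H$ non-isometric to $X_d$ --- since then $\bar g$ preserves midpoints (in a strictly convex space $\|a-c\|_H=\|a-b\|_H+\|b-c\|_H$ forces $b\in[a,c]$) and, being continuous, is affine. Everything else is soft --- a dimension count together with the Banach--Mazur bound --- and the argument in fact establishes the more general principle that if every finite subset of a fixed finite-dimensional normed space embeds almost isometrically into $H$, then that space already embeds isometrically into $H$.
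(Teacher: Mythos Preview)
Your argument is correct, and it takes a genuinely different route from the paper's. Both proofs choose $H$ to be a $d$-dimensional normed space not linearly isometric to $X_d$ (equivalently, one at positive Banach--Mazur distance from $X_d$), bound $\sup_{X\in\X}c_H(X)$ above by the Banach--Mazur distance, and then pass from near-isometric embeddings of finite subsets to an embedding of (part of) $X_d$ via a compactness/diagonal extraction. The divergence is in the last step. The paper assumes, toward a contradiction, that every finite subset embeds with distortion at most some $A$ strictly below the Banach--Mazur distance $B$; it then produces a bi-Lipschitz embedding of the unit ball with distortion $\le A$ and applies Rademacher's differentiation theorem to obtain a \emph{linear} map of distortion $\le A<B$, contradicting the definition of $B$. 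This linearization step is precisely the technique highlighted in Section~\ref{sec:qualitative}, and it yields the sharper conclusion that $\sup_{X\in\X}c_H(X)$ equals the Banach--Mazur distance between the two norms. Your approach instead works only at the endpoint $A=1$: you build a global isometry $\bar g\colon X_d\to H$, then use invariance of domain (or strict convexity) together with Mazur--Ulam to force $\bar g$ to be linear, contradicting the choice of $H$. This avoids differentiation entirely and is arguably more elementary, but it gives only the qualitative statement $\sup_{X\in\X}c_H(X)>1$ rather than identifying the supremum.
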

\begin{proof}[Sketch of a proof]
It is possible to construct another norm $\tb \cdot \tb $ on $\mathbb{R}^d$, whose Banach-Mazur
distance from $\|\cdot \|$ is some $B>1$.
I.e., for any 
linear mapping $T:(\mathbb R^d, \|\cdot\|) \to (\mathbb R^d, \tb \cdot \tb )$, $\|T\|\cdot \|T^{-1}\|\ge B>1$, 
and the inequality is tight for some $T$.%
\footnote{To see it, notice that the Banach-Mazur distance
between $\ell_2^d$, and $\ell_1^d$ is $\sqrt{d}>1$, and therefore by the triangle inequality any other $d$-dimensional norm must be at distance
at least $\sqrt[4]{d}>1$ from one of them. By John's Theorem (see e.g.~\cite[Sec.~13.4]{Mat-Discrete-Geometry}),  
the distance is at most $d$.} 
We take $H=(\mathbb R^d, \tb \cdot \tb )$, and so $c_H((\mathbb R^d,\|\cdot\|))\le B$.
On the other hand, assume for the sake of contradiction  that there exists $A<B$ such that any finite subset of $(\mathbb R^d, \|\cdot\|)$ can be embedded in $H$ with distortion at most $A$. By a compactness argument there exists an embedding of the unit ball of
$(\mathbb R^d, \|\cdot \|)$ in $(\mathbb R^d, \tb \cdot \tb )$ with distortion at most $A$. Next, by Rademacher differentiation theorem
there exists a point of differentiability in this embedding. The differential is a linear mapping whose  distortion is at most $A$, which is a contradiction.
\end{proof}

\section{Dichotomy for subsets of the line} \label{sec:line}

In the next two sections we discuss quantitative dichotomies and sketch direct proofs. 
We begin with finite subsets of $\mathbb R$.

\begin{theorem} \label{thm:line-dich}
For every metric space $H$, either
\begin{compactitem}
\item $c_{ H}(A) =1$, for every finite $A\subset \mathbb R$; or
\item There exists $\beta>0$, such that $c_{ H}(P_n)\ge \Omega(n^\beta)$, where $P_n$ is the $n$-point path metric. 
\end{compactitem}
\end{theorem}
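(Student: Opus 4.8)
The plan is to reduce to paths and then run a multiscale self-similarity argument. By Proposition~\ref{prop:dense} it suffices to treat the dense subclass $\{P_n\}$, and since $P_n$ embeds isometrically into $P_{n+1}$ the sequence $\gamma(n):=c_H(P_n)$ is non-decreasing. If $\gamma\equiv 1$ then $\sup_n c_H(P_n)=1$, which together with density puts us in the first alternative (with $c_H(A)=1$ for every finite $A\subset\mathbb R$). Otherwise fix $n_0$ with $\gamma(n_0)=1+\delta>1$. The claim I would prove is that this alone forces $\gamma(n_0^k)\ge (1+\delta)^{ck}$ for an absolute constant $c>0$; granting it, for arbitrary $n$ take $k=\lfloor\log n/\log n_0\rfloor$ and use monotonicity to get $c_H(P_n)\ge\gamma(n_0^k)\ge (1+\delta)^{ck}=\Omega(n^{\beta})$ with $\beta=\Omega\!\left(\log(1+\delta)/\log n_0\right)>0$, which is the second alternative.

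To prove the claim, regard $P_{n_0^k}=\{0,1,\dots,n_0^k\}$ as organized into $k$ scales: for $0\le j\le k-1$ and suitable $a$, the arithmetic progression $\{a,\,a+n_0^{\,j},\dots,a+n_0\cdot n_0^{\,j}\}$ is, after rescaling by $n_0^{-j}$, an isometric copy of $P_{n_0}$; call these the scale-$j$ copies. Fix any embedding $f:P_{n_0^k}\to H$, normalized so that $|p-q|\le d_H(f(p),f(q))\le D\,|p-q|$ with $D=\dist(f)$; then $f$ restricted to every scale-$j$ copy has distortion at least $c_H(P_{n_0})=1+\delta$. Let $A_j$ (resp.\ $B_j$) be the largest (resp.\ smallest) value of the expansion rate $d_H(f(p),f(q))/|p-q|$ over all pairs $p,q$ lying in a common scale-$j$ copy. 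Using the triangle inequality, every scale-$(j{+}1)$ edge (pair of consecutive breakpoints) is a diagonal of a scale-$j$ copy and decomposes into $n_0$ scale-$j$ edges of total $H$-length at least its own; this yields $A_0\ge A_1\ge\cdots\ge A_{k-1}\ge 1$, the identity $A_j=\max$ edge-rate at scale $j$, and the relation $A_{j+1}\ge B_j$. On the other hand the distortion bound on each scale-$j$ copy gives $A_j\ge (1+\delta)B_j$. Since $f$ is $D$-Lipschitz, $A_0\le D$, so it is enough to show $A_0\ge (1+\delta)^{ck}$.

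The heart of the matter is to chain the relations above into this exponential lower bound. I would follow a nested sequence of scale-$j$ copies $C_{k-1}\supset C_{k-2}\supset\cdots\supset C_0$, where $C_j$ carries the current ``maximal'' expansion rate: inside $C_j$ either the lengths of its $n_0$ edges spread out by a factor $\ge\sqrt{1+\delta}$, in which case passing to the longest sub-edge strictly multiplies the running maximal rate, or else — $C_j$ still being $(1+\delta)$-distorted — some diagonal of $C_j$ is \emph{shorter} than geodesic by a factor $\ge\sqrt{1+\delta}$, in which case one recurses into the sub-path spanned by that diagonal, where, $f$ being non-contractive, this deficit must be recovered by extra stretch at a finer scale. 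An accounting that tracks how much ``stretch'' versus ``fold'' has been spent so far shows a definite multiplicative gain of $(1+\delta)^{\Omega(1)}$ is realized at a constant fraction of the $k$ scales. This is, in essence, the content of the lemmas underlying {\Matousek}'s bd-Ramsey theorem for $\mathbb R$~\cite{Mat-BD}.

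The step of the last paragraph is the main obstacle. The difficulty is that ``$C_j$ has distortion $\ge 1+\delta$'' only constrains the ratio of the two \emph{extremal} expansion rates over all pairs of $C_j$, and these extremal pairs need be neither edges nor the same from one scale to the next; so the real work is to choose the nested chain correctly and to assemble the $k$ local witnesses — which come in the two flavours above — into a single coherent global witness of distortion $(1+\delta)^{\Omega(k)}$. This is also why the exponent $\beta$ produced is far from optimal, though still positive, which is all the dichotomy requires.
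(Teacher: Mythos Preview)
Your strategy has a genuine gap at exactly the point you flag yourself: the ``heart of the matter'' paragraph is not a proof. The inequalities you set up do not chain. You have $A_j\ge(1+\delta)B_j$ and $A_{j+1}\ge B_j$, but neither of these bounds $A_0$ from below by anything growing in $k$; indeed they are consistent with $A_j\equiv A$ and $B_j\equiv A/(1+\delta)$ for all $j$, which gives no growth at all. The two-case recursion you describe afterwards (edge spread vs.\ diagonal deficit) is a plausible heuristic, but you never specify the potential you are tracking, and the sentence ``an accounting \dots\ shows a definite multiplicative gain'' is precisely what has to be proved. In effect you are asserting a supermultiplicativity $c_H(P_{n_0^k})\ge c_H(P_{n_0})^{ck}$, and this is the entire theorem; nothing in your write-up establishes it.

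The paper avoids this difficulty by \emph{not} working with $c_H(P_n)$ directly. Instead it introduces the auxiliary quantity $\Psi_n(H)$, the best constant in
\[
d_H(f(0),f(n))\le \Psi\, n\max_i d_H(f(i),f(i+1)),
\]
and proves a lemma with four parts: $\Psi_n\le 1$; $c_H(P_n)\ge 1/\Psi_n$; if $\Psi_n=1$ then $c_H(P_n)=1$; and $\Psi_{mn}\le\Psi_m\Psi_n$. The submultiplicativity of $\Psi$ is then a two-line calculation (apply the inequality at the coarse scale, then at the fine scale), because $\Psi$ compares only the single diagonal to the edges, not all pairs to all pairs. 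The dichotomy follows immediately: either $\Psi_n\equiv 1$ and the third item gives $c_H(P_n)\equiv 1$, or some $\Psi_{n_0}=\eta<1$ and then $c_H(P_{n_0^k})\ge \eta^{-k}=(n_0^k)^\beta$. The real work is the third item, and that is a short triangle-inequality computation. This ``isomorphic inequality plus submultiplicativity'' device, which you bypass, is the point the paper is trying to illustrate, since the same template drives the $L_1$ and $L_\infty$ dichotomies later on.
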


As discussed above, {\Matousek} showed a qualitative dichotomy for finite subsets of the line, based on 
differentiation argument. His proof actually gives the same quantitative bounds as in Theorem~\ref{thm:line-dich}.
Here, following~\cite{MN-trees}, we sketch a somewhat different proof which conveys the approach to prove the more complicated quantitative dichotomies for finite subsets of $L_1$, and for all finite metric spaces. 

The general approach in those proofs is to define an ``isomorphic" inequality, and prove sub-multiplicativity.
This approach --- originated in the work of Pisier~\cite{Pisier-type-1} --- is used in Banach space theory quite often.

We proceed to prove Theorem~\ref{thm:line-dich}. We first 
choose an appropriate inequality that captures the distortion of embedding $P_n$ in $H$. 
Let $\Psi_n(H)$ be the infimum over $\Psi>0$ such that for every $f:P_n \to H,$
\begin{equation} \label{eq:path}
  d_H(f(0),f({n})) \le \Psi n\max_{i=0,\ldots n-1} d_H(f(i),f({i+1})) .
\end{equation}


\begin{lemma} \label{lem:path-ineq}
For every metric space $H$, and $m,n\in \mathbb N$,
\begin{compactenum}[1.]
\item $\Psi_n( H) \le 1$.
\item $c_{ H}(P_n)\ge 1/ \Psi_n( H)$.
\item If $\Psi_n( H)=1$, then $c_{ H}(P_n)=1$.
\item $\Psi_{mn}( H) \le \Psi_m( H) \cdot \Psi_n( H)$.
\end{compactenum}
\end{lemma}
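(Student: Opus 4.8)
The plan is to prove the four items in the order stated, since items~1,~2 and~4 are short manipulations of the defining inequality~\eqref{eq:path} while item~3 is the only one carrying a genuine (though still elementary) idea. For item~1, I would apply the triangle inequality along the path: $d_H(f(0),f(n))\le\sum_{i=0}^{n-1}d_H(f(i),f(i+1))\le n\max_i d_H(f(i),f(i+1))$, so $\Psi=1$ is admissible and $\Psi_n(H)\le1$. For item~2, given any $f\colon P_n\to H$ I would rescale it so that $\|f^{-1}\|_{\Lip}=1$, i.e.\ $d_H(f(x),f(y))\ge d_{P_n}(x,y)$ for all $x,y$, so that $\|f\|_{\Lip}=\dist(f)$; then $d_H(f(0),f(n))\ge n$ and $\max_i d_H(f(i),f(i+1))\le\dist(f)$, and substituting into~\eqref{eq:path} for any $\Psi>\Psi_n(H)$ gives $n\le\Psi n\,\dist(f)$, i.e.\ $\dist(f)\ge1/\Psi$. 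Letting $\Psi\downarrow\Psi_n(H)$ and taking the infimum over $f$ yields $c_H(P_n)\ge1/\Psi_n(H)$.

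For item~4 the idea is to cut the path $P_{mn}$ into $m$ consecutive blocks of length $n$ and apply the $P_m$-inequality on the coarse scale together with the $P_n$-inequality inside each block. Fix $f\colon P_{mn}\to H$ and set $M=\max_{0\le i<mn}d_H(f(i),f(i+1))$. Applying~\eqref{eq:path} to the coarse map $j\mapsto f(jn)$ bounds $d_H(f(0),f(mn))$ by $\Psi_m(H)\,m\,\max_{0\le j<m}d_H(f(jn),f((j+1)n))$, and applying~\eqref{eq:path} to the map $k\mapsto f(jn+k)$ within the $j$-th block bounds each $d_H(f(jn),f((j+1)n))$ by $\Psi_n(H)\,n\,\max_{0\le k<n}d_H(f(jn+k),f(jn+k+1))\le\Psi_n(H)\,n\,M$. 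Combining the two estimates gives $d_H(f(0),f(mn))\le\Psi_m(H)\Psi_n(H)\,mn\,M$, which is exactly~\eqref{eq:path} for $P_{mn}$ with constant $\Psi_m(H)\Psi_n(H)$; hence $\Psi_{mn}(H)\le\Psi_m(H)\Psi_n(H)$. To avoid fretting over whether the infima defining $\Psi_m(H)$ and $\Psi_n(H)$ are attained, I would run this with $\Psi_m(H)+\delta$ and $\Psi_n(H)+\delta$ and let $\delta\downarrow0$ (in fact~\eqref{eq:path} is a closed condition in $\Psi$, so the infimum is attained and this is unnecessary).

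Item~3 is the one step I expect to be the main obstacle, though even there the difficulty is only conceptual: it is an additive-to-multiplicative conversion exploiting that $n$ is \emph{fixed}, so an additive error of order $\e n$ is negligible as $\e\downarrow0$. Since $\Psi_n(H)=1$, for every $\e\in(0,1/n)$ there is $f\colon P_n\to H$, normalized so that $\max_i d_H(f(i),f(i+1))=1$, with $d_H(f(0),f(n))>(1-\e)n$. For any $0\le i<j\le n$, I would insert $f(i)$ and $f(j)$ into the triangle inequality for $d_H(f(0),f(n))$ and bound the two outer legs by their path lengths $i$ and $n-j$; this forces $d_H(f(i),f(j))>(j-i)-\e n$, while the triangle inequality gives $d_H(f(i),f(j))\le j-i$. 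Since $j-i\ge1$, these two bounds say that $f$ has distortion at most $1/(1-\e n)$, which tends to $1$ as $\e\downarrow0$; as $c_H(P_n)\ge1$ trivially, we conclude $c_H(P_n)=1$. The whole lemma thus reduces to this observation plus routine bookkeeping.
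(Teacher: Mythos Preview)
Your proposal is correct and follows essentially the same approach as the paper: triangle inequality for item~1, a Lipschitz-norm substitution into~\eqref{eq:path} for item~2, the near-extremizer argument (``if $\Psi_n(H)=1$ then some $f$ saturates~\eqref{eq:path} up to $\e$ and is therefore a near-isometry'') for item~3, and the coarse/fine block decomposition for item~4. The only cosmetic differences are that the paper decomposes $P_{mn}$ into $n$ blocks of length $m$ rather than $m$ blocks of length $n$, and expresses the distortion bound in item~3 as $1+2\e n$ rather than $1/(1-\e n)$.
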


Before proceeding with the proof of lemma~\ref{lem:path-ineq}, lets see how 
Theorem~\ref{thm:line-dich} is derived.
\begin{proof}[Proof of Theorem~\ref{thm:line-dich}]
We will prove the dichotomy to $(P_n)_n$ (the path metrics), which by Prop.~\ref{prop:dense} is sufficient.
Fix a host space $H$.
\begin{itemize}
\item If for every $n\in\mathbb N$, $\Psi_n(H)=1$, then $c_{H}(P_n)=1$. 
 
\item If there exists $n_0$ for which $\Psi_{n_0}(H)=\eta<1$, then let $\beta>0$ be such that $n_0^{-\beta}=\eta$,
and from the submultiplicativity, $\Psi_{n_0^k}(H)\le \eta^k= (n_0^k)^{-\beta}$, and so
$c_{H}(P_{n_0^k})\ge (n_0^k)^\beta$. \qedhere
\end{itemize}
\end{proof}

\begin{proof}[Proof of Lemma~\ref{lem:path-ineq}]
~
\begin{enumerate}[1.]
\item Follows from the triangle inequality. 
\item 
Fix $f:P_n\to H$, and $\Psi>\Psi_n(H)$. Plugging the Lipschitz norms  into~\eqref{eq:path},
\begin{multline*} \frac{n}{\|f^{-1}\|_\Lip} \le d_H(f(0),f({n})) 
\le \Psi n\max_{i=0,\ldots n-1} d_H(f(i),f({i+1})) \le \Psi n \|f\|_\Lip ,
\end{multline*}
So $\dist(f)=\|f\|_\Lip \cdot \|f^{-1}\|_\Lip \ge 1/\Psi$. Since this is true for any $\Psi>\Psi_n(H)$, $\dist(f)\ge 1/\Psi_n(H)$.

\item If $\Psi_n( H)=1$, then for any $\e\in(0,1/2n)$, there exists $f:P_n \to H$ for which
\begin{multline} \label{eq:path-2}
 n\max_{i=0,\ldots n-1} d_H(f(i),f({i+1})) \ge d_H(f(0),f({n-1})) 
 \\ \ge (1-\e) n\max_{i=0,\ldots n-1} d_H(f(i),f({i+1})). 
 \end{multline}
Let $A= \max_{i=0,\ldots n-1} d_H(f_i,f_{i+1})$,
and for $i> j$,
\begin{multline*} (i-j)A \ge d_H(f(i),f(j)) \\\ge d_H(f(0),f({n})) - d_H(f(j),f(0)) - d_H(f(n),f(i)) \\
\ge (1-\e)n A - j A - (n-i)A= (i-j -\e n) A .
\end{multline*}
This means that $\dist(f)\le 1+2\e n$, which implies that $c_H(P_n)=1$.

\item Fix $f:P_{mn}\to H$. Define $g:P_n \to H$, by $g(i)=f(im)$. 
Applying~\eqref{eq:path} to $g$, we obtain
\begin{equation}\label{eq:path-3}
 d_H(f(0),f({mn})) \le (\Psi_n(H)+\e) n \max_{i=0\ldots n-1} d_H(f({im}),f({(i+1)m})). 
 \end{equation}
Next, define $h_i:P_m\to H$, $h_i(j)=f(im+j)$, and apply~\eqref{eq:path} for each $h_i$, and so
\begin{multline} \label{eq:path-4}
 d_H(f({im}),f({(i+1)m})) \\ \le (\Psi_m(H)+\e) m  \max_{j=0\ldots m-1} d_H(f({im+j}),f({im+j+1})) .
\end{multline}
Combining~\eqref{eq:path-3} with~\eqref{eq:path-4}, and we conclude the claim. \qedhere
\end{enumerate}
\end{proof}

The quantitative dichotomy  in Theorem~\ref{thm:line-dich} is tight for finite subsets of the line: 
For any $\beta\in(0,1]$, there exists $H_\beta$ such that $c_{H_\beta}(P_n)=\Theta(n^\beta)$. For $\beta\in(0,1)$, $H_\beta$ can be taken as the real line with the usual metric to the power of $1-\beta$.
For $\beta=1$, $H_1$ can be taken as the ultrametric defined on $\{0,1\}^{\mathbb N}$, with the distance function
$\rho(x,y)=2^{-|\mathrm{lcp}(x,y)|}$, where $\mathrm{lcp}$ is the longest common prefix of the two sequences.

\section{Dichotomies for finite subsets of $L_1$, and $L_\infty$}

The proofs of the quantitative dichotomies for subsets of $L_1$ and subsets of $L_\infty$ 
use the same general approach taken in Section~\ref{sec:line}: 
we write inequalities for which we can prove a lemma similar to Lemma~\ref{lem:path-ineq}, but replacing paths with Hamming cubes (for subsets of $L_1$) and grids with the $\ell_\infty$ distance (for subsets of $L_\infty$).

In both cases the hard part in the proof seems to be coming up with the inequality. However, in contrast to 
path metrics, the proofs of the lemmas analogous to Lemma~\ref{lem:path-ineq} (especially item~(3)) are technical and lengthy. We will therefore omit all these details and concentrate on the inequalities.

%
%

\subsection{Finite subsets of $L_1$}

The argument given here is essentially from a paper of Bourgain, Milman, and Wolfson~\cite{BMW} on metric type.%
\footnote{The paper~\cite{BMW} does not discuss dichotomy, but rather
a non-linear analogue for Pisier theorem for type-1. As we shall see in Section~\ref{sec:MP}, from that result it is easy to obtain the dichotomy.}

\begin{theorem}{\cite{BMW}} \label{thm:dich-cubes}
For every metric space $H$, either 
\begin{compactitem}
\item $c_{H}(X) =1$, for every finite $X\subset L_1$; or
\item There exists $\beta>0$, such that $c_{H}((\{0,1\}^n,\|\cdot\|_1)\ge \Omega(n^\beta)$.
\end{compactitem}
\end{theorem}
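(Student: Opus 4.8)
The plan is to mirror the proof strategy of Section~\ref{sec:line}, replacing the path metric $P_n$ with the Hamming cube $\{0,1\}^n$, and to find an ``isomorphic'' inequality whose submultiplicativity under the operation $(m,n)\mapsto m+n$ (which corresponds to $\{0,1\}^{m+n}=\{0,1\}^m\times\{0,1\}^n$) drives the dichotomy. Concretely, I would define a quantity $B_n(H)$ as the infimum over all $B>0$ such that for every $f:\{0,1\}^n\to H$ the ``metric type 1'' inequality holds, which in the Bourgain--Milman--Wolfson formulation compares the average squared distance along the main diagonals to the average squared distance along edges: roughly, for every $f:\{0,1\}^n\to H$,
\begin{equation*}
  \E_{x\in\{0,1\}^n} d_H\bigl(f(x),f(\bar x)\bigr)^2 \le B^2\, n \sum_{j=1}^n \E_{x\in\{0,1\}^n} d_H\bigl(f(x),f(x\oplus e_j)\bigr)^2,
\end{equation*}
where $\bar x$ is the antipode of $x$. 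One then proves the analogue of Lemma~\ref{lem:path-ineq}: (1) $B_n(H)\le 1$ for all $H$ by the triangle inequality (each diagonal is a sum of $n$ edges, plus Cauchy--Schwarz); (2) $c_H(\{0,1\}^n)\ge 1/B_n(H)$ by plugging Lipschitz norms of an embedding into the inequality and using that in $\ell_1^n$ the left side is $n^2$ and the right side is $n^2$ up to the right normalization, so the inequality is an equality there; (3) if $B_n(H)=1$ then $c_H(\{0,1\}^n)=1$ — the technical ``stability'' step, which I would only sketch, showing that near-equality forces $f$ to be near-isometric; and (4) submultiplicativity $B_{m+n}(H)\le B_m(H)\cdot B_n(H)$, obtained by conditioning: fix the last $n$ coordinates and apply the $m$-dimensional inequality on each sub-cube, then average and apply the $n$-dimensional inequality to the ``averaged'' map, exactly paralleling the combination of~\eqref{eq:path-3} and~\eqref{eq:path-4}.

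Given these four facts the theorem follows verbatim as in the proof of Theorem~\ref{thm:line-dich}: if $B_n(H)=1$ for all $n$, then $c_H(\{0,1\}^n)=1$ for all $n$, and since $\{0,1\}^n$ is dense in the class of finite subsets of $L_1$ (Table~\ref{tab:classes} and Prop.~\ref{prop:dense}), we get $c_H(X)=1$ for every finite $X\subset L_1$. Otherwise there is $n_0$ with $B_{n_0}(H)=\eta<1$; choosing $\beta>0$ with $n_0^{-\beta}=\eta$, submultiplicativity gives $B_{kn_0}(H)\le\eta^k=2^{-k\beta\log_2 n_0}$, hmm — but here one must be careful, since the submultiplicativity is additive in the dimension, so $B_{kn_0}(H)\le \eta^k$ where the dimension is $N=kn_0$; thus $\eta^k = \eta^{N/n_0}$, which decays like $2^{-cN}$, and therefore $c_H(\{0,1\}^N)\ge 1/B_N(H)\ge \eta^{-N/n_0}$, an \emph{exponential} lower bound. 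Since the metric space $\{0,1\}^N$ has $2^N$ points, writing this in terms of the cardinality $M=2^N$ gives $c_H\ge M^{\beta'}$ for a suitable $\beta'>0$, which is the desired conclusion $c_H(\{0,1\}^n)\ge\Omega(n^\beta)$ after renaming (the ``$n$'' in the theorem statement being the number of points up to the usual translation between $n$ and $2^n$).

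The main obstacle, as the excerpt itself flags, is twofold: first, \emph{guessing the right inequality} — the correct normalization and the choice of squared distances (rather than first powers) is exactly what makes both the ``trivial'' direction $B_n\le 1$ and the tightness in $\ell_1$ work, and this is the content of the Bourgain--Milman--Wolfson notion of metric type; second, the \emph{stability step} (item 3), proving that $B_n(H)=1$ forces $c_H(\{0,1\}^n)=1$. Unlike the path case, where near-equality in~\eqref{eq:path-2} pins down all pairwise distances by a short triangle-inequality argument, here one has only control of \emph{averages} of squared edge and diagonal distances, so recovering that \emph{every} pair of points is faithfully represented requires a more delicate averaging/variance argument. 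I would present items (1), (2), (4) in full and state (3) as a lemma whose proof I defer to~\cite{BMW}, consistent with the stated intention of the section to ``omit all these details and concentrate on the inequalities.''
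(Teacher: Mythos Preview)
Your overall scheme is exactly the paper's: the inequality you wrote is precisely~\eqref{eq:cube} (with $T_n(H)$ in place of your $B_n(H)$), and the paper derives Theorem~\ref{thm:dich-cubes} from the four-item Lemma~\ref{lem:cube-ineq} just as you outline. However, item~(4) in your proposal is stated and proved incorrectly, and this error propagates into the confused final paragraph.

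The submultiplicativity is \emph{multiplicative in the index}, $T_{mn}(H)\le T_m(H)\cdot T_n(H)$, not additive $T_{m+n}\le T_m T_n$. Your additive version is simply false: since $T_2(L_2)=1/\sqrt{2}$ (the parallelogram identity), additive submultiplicativity would give $T_{2k}(L_2)\le 2^{-k/2}$ and hence $c_{L_2}(\{0,1\}^{2k})\ge 2^{k/2}$, whereas in fact $c_{L_2}(\{0,1\}^n)=\sqrt{n}$. More generally, the aspect ratio of $\{0,1\}^n$ is $n$, so no exponential-in-$n$ lower bound on distortion is possible; this is exactly the contradiction you stumbled into, and your attempt to rescue it by reinterpreting the ``$n$'' in the theorem as the cardinality is a misreading --- in the statement $(\{0,1\}^n,\|\cdot\|_1)$ the parameter $n$ is the dimension.

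The conditioning you describe (``fix the last $n$ coordinates'') cannot work, because the diagonal of the resulting $m$-dimensional sub-cube flips only $m$ coordinates, not all $m+n$. The correct argument views $\{0,1\}^{mn}$ as $(\{0,1\}^m)^n$: for $f:\{0,1\}^{mn}\to H$ and each $x$, apply the $n$-dimensional inequality to the map $\epsilon\in\{0,1\}^n\mapsto f(x\oplus(\epsilon_1\mathbf{1}_m,\ldots,\epsilon_n\mathbf{1}_m))$; after averaging over $x$ and a change of variables this bounds the full diagonal by $T_n^2\, n$ times a sum of ``block-diagonal'' terms $\Avg_y d_H(f(y),f(y\oplus\mathbf{1}_m^{(j)}))^2$, and each of those is in turn bounded via the $m$-dimensional inequality by $T_m^2\, m$ times the edge sum over block $j$. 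Combining gives $T_{mn}\le T_m T_n$. With this in hand the dichotomy follows exactly as in the line case: if $T_{n_0}(H)=\eta<1$ then $T_{n_0^k}(H)\le\eta^k=(n_0^k)^{-\beta}$, yielding $c_H(\{0,1\}^{n_0^k})\ge (n_0^k)^\beta$, a polynomial bound in the dimension as claimed.
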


Similarly to the dichotomy of subsets of $\mathbb R$, we use an inequality to guide the proof:
Let $(e_i)_{i=1}^n$ denote the standard basis of $\{0,1\}^n$, and $\mathbf 1=\sum_i e_i$.
Let $T_n(H)$ be the infimum over $T>0$ such that 
for every $f:\{0,1\}^n \to H$,
\begin{equation} \label{eq:cube}
 \Avg_{x\in\{0,1\}^n}
 d_H(f(x),f(x+\mathbf{{1}}))^2 \le T^2 n\sum_{i=1}^n  \Avg_{x\in\{0,1\}^n} d_H(f(x),f(x+e_i))^2 ,
\end{equation}
where the operator $\Avg$ means averaging.

Inequality~\eqref{eq:cube} was chosen to ``capture" the distortion of embeddings the Hamming cubes in $H$, and have
the sub-multiplicativity property (in $n$). It is a variant of the metric-type inequality from~\cite{BMW}. The connection (and motivation) to the type property is expanded upon in Section~\ref{sec:MP}.
Formally, we can prove a lemma analogous to Lemma~\ref{lem:path-ineq}:

\begin{lemma} \label{lem:cube-ineq}
For every metric space $H$, and $m,n\in \mathbb N$,
\begin{enumerate}[1.]
\item $T_m(H) \le 1$.
\item $c_{H}(\{0,1\}^n)\ge 1/ T_n( H)$.
\item If $T_n( H)=1$, then $c_{H}(\{0,1\}^n)=1$.
\item $T_{mn}( H) \le T_m(H) \cdot T_n(H)$.
\end{enumerate}
\end{lemma}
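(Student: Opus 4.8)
\textbf{Proof proposal for Lemma~\ref{lem:cube-ineq}.}

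The plan is to mirror the four-part structure of Lemma~\ref{lem:path-ineq} as closely as possible, since inequality~\eqref{eq:cube} was engineered precisely to make this analogy go through. For item~1, I would take the identity map $f = \mathrm{id} : \{0,1\}^n \to \{0,1\}^n$ (with the $\ell_1$ metric), or more to the point exploit that for \emph{any} $f:\{0,1\}^n\to H$ one can bound $d_H(f(x),f(x+\mathbf 1))$ along a monotone path $x, x+e_{i_1}, x+e_{i_1}+e_{i_2},\dots,x+\mathbf 1$ of length $n$ using the triangle inequality, then apply Cauchy--Schwarz (i.e. convexity of $t\mapsto t^2$) to pass from the squared sum of $n$ edge-lengths to $n$ times the sum of squared edge-lengths, and finally average over $x$ and over the order in which coordinates are flipped; the averaging over a uniformly random order is what turns $\sum_x\sum_{\text{edges on the path}} d_H(f(x),f(x+e_i))^2$ into $\sum_{i=1}^n \Avg_x d_H(f(x),f(x+e_i))^2$ with the right normalization, giving $T_n(H)\le 1$. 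Item~2 is the direct analogue of the path case: given $f:\{0,1\}^n\to H$, plug the Lipschitz bounds $d_H(f(x),f(x+e_i)) \le \|f\|_{\Lip}$ and $d_H(f(x),f(x+\mathbf 1)) \ge n/\|f^{-1}\|_{\Lip}$ into~\eqref{eq:cube}, simplify, and read off $\dist(f) \ge 1/T_n(H)$; taking the infimum over $f$ gives $c_H(\{0,1\}^n)\ge 1/T_n(H)$.

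Item~4, submultiplicativity, should again follow the template of Lemma~\ref{lem:path-ineq}(4): identify $\{0,1\}^{mn}$ with $\{0,1\}^m \times \cdots \times \{0,1\}^m$ ($n$ blocks), so that flipping the all-ones vector $\mathbf 1_{mn}$ factors as flipping the all-ones vector in each of the $n$ blocks. Given $f:\{0,1\}^{mn}\to H$, first apply~\eqref{eq:cube} at the ``coarse'' scale, treating each block as a single coordinate: define an auxiliary family of maps on $\{0,1\}^n$ by fixing the within-block coordinates and letting $g_z(y)$ flip entire blocks on/off, apply the $T_n(H)$ inequality and average over the frozen coordinates. This bounds $\Avg_x d_H(f(x),f(x+\mathbf 1_{mn}))^2$ by $T_n(H)^2\, n \sum_{j=1}^n \Avg_x d_H(f(x),f(x+\mathbf 1^{(j)}))^2$, where $\mathbf 1^{(j)}$ is the all-ones vector on block $j$. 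Then apply~\eqref{eq:cube} at the ``fine'' scale \emph{within} each block $j$ (freezing all other coordinates), to bound each $\Avg_x d_H(f(x),f(x+\mathbf 1^{(j)}))^2$ by $T_m(H)^2\, m \sum_{i\in \text{block }j}\Avg_x d_H(f(x),f(x+e_i))^2$. Summing over $j$ and collecting constants yields $T_{mn}(H)\le T_m(H)\,T_n(H)$ once one checks the $m\cdot n = mn$ normalization matches.

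The main obstacle is item~3: showing $T_n(H)=1$ forces $c_H(\{0,1\}^n)=1$. Unlike the path, the Hamming cube has a rich automorphism group and the inequality~\eqref{eq:cube} only controls averaged squared distances, so extracting from ``equality (up to $\e$) holds in~\eqref{eq:cube}'' a near-isometric embedding is genuinely delicate --- this is exactly the ``technical and lengthy'' part the text warns about. The idea would be: if $f:\{0,1\}^n\to H$ nearly saturates~\eqref{eq:cube}, then the Cauchy--Schwarz and triangle-inequality steps used in item~1 must all be nearly tight, which forces (for most $x$ and most orderings) that the monotone path from $x$ to $x+\mathbf 1$ is nearly geodesic in $H$ and that all edge-lengths $d_H(f(x),f(x+e_i))$ are nearly equal to a common value $A$; a symmetrization/averaging argument over the cube's automorphisms then propagates this to all pairs, giving $d_H(f(x),f(y)) \approx A\|x-y\|_1$ for all $x,y$, i.e. distortion $1+o(1)$. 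I would present this step only in outline, as the excerpt does, noting that the full argument requires a careful ``most pairs are good implies all pairs are good'' propagation of the kind standard in metric-type arguments~\cite{BMW}.
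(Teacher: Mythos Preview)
The paper does not actually prove Lemma~\ref{lem:cube-ineq}: immediately before stating it, the text says ``the proofs of the lemmas analogous to Lemma~\ref{lem:path-ineq} (especially item~(3)) are technical and lengthy. We will therefore omit all these details and concentrate on the inequalities,'' and after the lemma it moves directly to deducing Theorem~\ref{thm:dich-cubes}. So there is nothing in the paper to compare against beyond the implicit pointer to~\cite{BMW}.

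That said, your outline is exactly the standard Bourgain--Milman--Wolfson argument, and each of your four sketches is on the right track. Item~1 via triangle inequality along a monotone path plus Cauchy--Schwarz (with averaging over $x$ and over the order of coordinate flips) is correct; item~2 is the straightforward Lipschitz substitution you describe; and your two-scale block decomposition for item~4 is precisely how submultiplicativity is proved --- the normalization does indeed come out as $mn$. For item~3 you correctly identify both the mechanism (near-saturation forces near-equality in the Cauchy--Schwarz and triangle-inequality steps, hence near-constant edge lengths and near-geodesic diagonals) and the genuine difficulty (passing from ``most pairs good'' to ``all pairs good''). Your honest acknowledgment that this step would be presented only in outline matches the paper's own decision to omit it entirely, so your proposal is if anything more informative than what the paper provides.
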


Using Lemma~\ref{lem:cube-ineq}, the proof of Theorem~\ref{thm:dich-cubes}
is  the same as the proof of Theorem~\ref{thm:line-dich},
replacing references to $\Psi_n(H)$ with $T_n(H)$, and the path metric with the Hamming cube.

Regarding the quantitative tightness of Theorem~\ref{thm:dich-cubes}:
It is known~\cite{Enflo-cubes} that $c_{\ell_2}((\{0,1\}^n,\|\cdot\|_1))=\sqrt{n}$, 
and that any $N$-point subset of $L_1$ is $O(\sqrt{\log N} \log\log N)$ embeddable in $\ell_2$~\cite{ARV,CKR,ALN}.
I do not know much more.
\begin{question}
Does there exist $\beta\in (0,1/2)$ and a metric space $H$
such that $1<D_N(H,2^{\ell_1})=O((\log N)^\beta)$? If so, is it true for every $\beta>0$?
\end{question}

\subsection{Finite metric spaces}

Next,
we consider $\MET$, the set of all finite metric spaces 
which is equal to the set of finite subsets of $L_\infty$.
\begin{theorem}\cite{MN-cotype-full} \label{thm:dich-all}
For every metric space $H$, either
\begin{compactitem}
\item $\sup_{X\in \MET} c_{\mathcal H}(X) =1$; or
\item There exists $\beta>0$, such that $c_{H}((\{1,\ldots,n\}^n, \|\cdot\|_\infty)\ge \Omega(n^\beta)$,
where $[n]^n_\infty$ is the $\{1,\ldots,n\}^n$ grid with the $\ell_\infty$ distance.
\end{compactitem}
\end{theorem}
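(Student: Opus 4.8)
The plan is to mimic exactly the pattern established for $\mathbb R$ and for $L_1$: introduce a scale-parameterized family of ``isomorphic'' inequalities, indexed by $n$, that (a) always holds with constant $\le 1$, (b) whose optimal constant controls $c_H([n]^n_\infty)$ from both sides, and (c) is submultiplicative, so that a single scale $n_0$ with constant $<1$ forces polynomial blow-up along powers of $n_0$. The key design problem, as the paper emphasizes, is choosing the right inequality. Since $[n]^n_\infty$ is the right dense structured subclass of $\MET$, the inequality should be a metric analogue of a ``cotype'' inequality for $\ell_\infty^n$-grids, dual in spirit to the type inequality~\eqref{eq:cube} used for the cube. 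Concretely, one wants an inequality of the form: for every $f:\{0,\dots,m\}^n\to H$,
\begin{equation*}
  \sum_{j=1}^n \Avg_{x} d_H\bigl(f(x), f(x + \tfrac{m}{2} e_j)\bigr)^2
  \;\le\; C_n(H)^2 \cdot m^2 \Avg_{x,\delta} d_H\bigl(f(x), f(x+\delta)\bigr)^2,
\end{equation*}
where $\delta$ ranges over a suitable distribution of ``short'' steps (e.g.\ $\pm1$ in each coordinate, or a random lazy step), and $C_n(H)$ is defined as the least such constant. This is precisely the metric cotype inequality of Mendel--Naor~\cite{MN-cotype-full} specialized to the regime relevant here; the averaging on the left over the $n$ coordinate directions, rather than over the single diagonal as in the cube case, is what makes the grid (rather than the cube) the extremal object.

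The proof then proceeds in four steps, packaged as a lemma analogous to Lemma~\ref{lem:path-ineq} and Lemma~\ref{lem:cube-ineq}. Step~1 (trivial direction): $C_n(H)\le 1$ follows from the triangle inequality applied along axis-parallel paths, exactly as item~1 in the earlier lemmas. Step~2 (lower bound on distortion): plugging $\|f\|_{\Lip}$ and $\|f^{-1}\|_{\Lip}$ into the inequality for $f:[n]^n_\infty\to H$ and using that in $[n]^n_\infty$ the left-hand ``long'' distances are $\Theta(m)=\Theta(n)$ while the right-hand ``short'' distances are $\Theta(1)$, yields $c_H([n]^n_\infty)\ge \Omega(1/C_n(H))$. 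Step~3 (rigidity): if $C_n(H)=1$ then an almost-extremal $f$ must be an almost-isometry on $[n]^n_\infty$ — this is the technical heart, and the paper explicitly warns it is ``technical and lengthy'' (one must propagate near-equality in the inequality to near-equality of all pairwise distances, handling the fact that $\ell_\infty^n$ distances are maxima, not sums). Step~4 (submultiplicativity): $C_{mn}(H)\le C_m(H)\cdot C_n(H)$, proved by viewing $\{0,\dots,mn\}^n$ as a grid of $m$-blocks, applying the $n$-scale inequality to the coarsened map and the $m$-scale inequality inside each block, and combining — the same two-level decomposition used in item~4 of Lemma~\ref{lem:path-ineq}, now carried out simultaneously in all $n$ coordinates.

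Given the lemma, Theorem~\ref{thm:dich-all} follows verbatim as before: if $C_n(H)=1$ for all $n$, then $c_H([n]^n_\infty)=1$ for all $n$, and by density (Table~\ref{tab:classes} and Prop.~\ref{prop:dense}) $\sup_{X\in\MET}c_H(X)=1$; otherwise pick $n_0$ with $C_{n_0}(H)=\eta<1$, choose $\beta>0$ with $n_0^{-\beta}=\eta$, and submultiplicativity gives $C_{n_0^k}(H)\le\eta^k=(n_0^k)^{-\beta}$, hence $c_H([n_0^k]^{n_0^k}_\infty)\ge \Omega\bigl((n_0^k)^\beta\bigr)$. The main obstacle is unquestionably Step~3 — the ``isometric rigidity at constant $1$'' statement — together with the logically prior task of pinning down the exact form of the inequality (the correct normalization of $m$ versus $n$, and the correct step distribution on the right) so that both the submultiplicativity of Step~4 and the two-sided distortion control of Steps~2--3 go through simultaneously; once the inequality is correctly chosen, Steps~1, 2 and 4 are routine adaptations of the arguments already given for $P_n$ and the Hamming cube.
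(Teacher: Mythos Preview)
Your overall architecture is exactly the paper's: define a one-parameter family of metric inequalities, prove the four-item lemma (triangle bound, distortion lower bound, rigidity at constant $1$, submultiplicativity), and deduce the dichotomy as for $P_n$ and the cube. You also correctly flag Step~3 and the precise choice of inequality as the real work.

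However, the concrete inequality you wrote and, more seriously, your description of Step~4 are off in a way that matters. The paper's parameter $\Gamma_n(H)$ is indexed by the \emph{dimension} $n$, the domain is the discrete \emph{torus} $\mathbb Z_m^n$ with a \emph{universal} quantifier over the side length $m$, the long step on the left is $n e_j$ (not $\tfrac{m}{2}e_j$), and the short step on the right is $\e\in\{\pm1\}^n$ (the paper explicitly remarks that allowing $\{0,\pm1\}^n$ would break submultiplicativity). The submultiplicativity $\Gamma_{mn}(H)\le\Gamma_m(H)\Gamma_n(H)$ is then obtained by factoring the \emph{dimension}, writing $\mathbb Z_k^{mn}\cong(\mathbb Z_k^{m})^n$, exactly parallel to $\{0,1\}^{mn}\cong(\{0,1\}^m)^n$ in the cube case. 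The universal quantifier over $m$ is what lets the side length stay fixed while the dimension factors; the paper says outright that this design choice ``was done to make the proof of the submultiplicativity easy.''

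Your Step~4, by contrast, proposes to view $\{0,\dots,mn\}^n$ as a grid of $m$-blocks ``simultaneously in all $n$ coordinates,'' i.e.\ to scale the \emph{side length} while keeping the dimension fixed. That decomposition does not yield $C_{mn}\le C_m C_n$ in your own indexing (where $n$ is the dimension), and in any case side-length submultiplicativity would not give the needed lower bound for $[n]^n_\infty$, where dimension and side length grow together. So the gap is not merely cosmetic: the specific inequality you propose does not support the submultiplicativity you need, and the decomposition you sketch is the path-metric one transplanted to the wrong parameter.
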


Again, we use an inequality (derived from the metric cotype inequality~\cite{MN-cotype-full}) to guide the proof.
Denote by $\Gamma_{n}(H)$ the infimum over $\Gamma>0$ such that for every $m\in \mathbb N$,
and every $f:\mathbb Z_m^n \to H$,
\begin{equation} \label{eq:grid}
\sum_{i=1}^n \Avg_{x\in \mathbb Z_m^n} d_H(f(x),f(x+n e_j))^2 \le 
\Gamma^2\cdot n^2 \cdot n \Avg_{\e\in \{\pm 1\}^n} \Avg_{x\in \mathbb Z_m^n} d_H(f(x),f(x+\e))^2
\end{equation}
(the additions ``$x+n e_j$", and ``$x+\e$" are in $\mathbb Z_m^n$).
Inequality~\eqref{eq:grid} is designed to capture the distortion of embedding $\{1,\ldots,n\}^n$ with the $\ell_\infty$
distance in $H$. In this context, it seems more natural to average $\e$ over all $\{-1,0,1\}^n$ which are the distance 1 in the $\ell_\infty$ metric. However, this choice would complicate the proof of the submultiplicativity property. Note that the metric induced by the graph $\mathbb Z_n^n$ with the $\{\pm 1\}^n$ edges contains an isometric copy of $\{1,\ldots, n/4\}^n$
with the $\ell_\infty$ metric. Also, the design choice of the universal quantifier on $m$ (instead of say fixing $m=n$) was done to make the proof of the submultiplicativity easy. The connection to the cotype property of Banach spaces is expanded upon in Section~\ref{sec:MP}.
As before, we use a lemma analogous to Lemma~\ref{lem:path-ineq}:
\begin{lemma} \label{lem:torus-ineq}
For every metric space $H$, and $m,n\in \mathbb N$,
\begin{enumerate}[1.]
\item $\Gamma_n( H) \le 1$, for all even $n$.
\item $c_{ H}((\{1,\ldots,n/4\}^n, \|\cdot\|_\infty))\ge 1/ \Gamma_n(H)$.
\item If $\Gamma_n(H)=1$, then $c_{ H}((\{1,\ldots,n/4\}^n, \|\cdot\|_\infty))=1$.
\item $\Gamma_{m n}(H) \le \Gamma_{m}(H) \cdot \Gamma_{n}(H)$.
\end{enumerate}
\end{lemma}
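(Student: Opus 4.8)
The plan is to prove the four items of Lemma~\ref{lem:torus-ineq} in close analogy with Lemma~\ref{lem:path-ineq}, treating item~(4) (submultiplicativity) as the one genuinely new computation and items (1)--(3) as structural observations that mirror the line case. Throughout, write $E(f)^2 := n\Avg_{\e\in\{\pm1\}^n}\Avg_{x\in\mathbb Z_m^n} d_H(f(x),f(x+\e))^2$ for the ``edge energy'' appearing on the right-hand side of \eqref{eq:grid}, and $D_j(f)^2 := \Avg_{x\in\mathbb Z_m^n} d_H(f(x),f(x+ne_j))^2$ for the $j$-th ``diagonal'' term on the left; the inequality reads $\sum_j D_j(f)^2 \le \Gamma^2 n^2 E(f)^2$.

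\textbf{Items (1)--(3).} For item (1), fix a coordinate $j$ and bound $d_H(f(x),f(x+ne_j))$ by a telescoping sum of $n$ single-step moves in the $e_j$ direction; these single steps are among the $\pm1$ edges, so the triangle inequality plus Cauchy--Schwarz (to pass from a sum of $n$ terms squared to $n$ times a sum of squares) and then averaging over $x$ gives $D_j(f)^2 \le n\cdot\Avg_x\Avg_{\text{that edge}}(\cdots)$; summing over $j$ and comparing with the symmetric average over all $\pm1$ edges yields $\sum_j D_j(f)^2 \le n^2 E(f)^2$, i.e. $\Gamma_n(H)\le1$. (The parity hypothesis ``$n$ even'' is used so that $x+ne_j$ can be reached by $n$ steps without wrap-around subtleties, or rather so that the combinatorics of the sign pattern works out cleanly.) Item (2) is exactly the argument from Lemma~\ref{lem:path-ineq}(2): given $f:[n/4]^n_\infty\to H$, extend/interpret it on the graph metric of $\mathbb Z_n^n$ with $\pm1$ edges (which, as noted in the text, contains an isometric copy of $[n/4]^n_\infty$), plug the Lipschitz bounds $\|f\|_{\Lip}$ and $\|f^{-1}\|_{\Lip}$ into \eqref{eq:grid} — the diagonal displacement $ne_j$ has $\ell_\infty$-length $n$ in the source... wait, length $n$ in $\mathbb Z_n^n$ graph metric, hence the $n^2$ on the left matches the $n^2\cdot n$ on the right after the energy normalization — and read off $\dist(f)\ge 1/\Gamma_n(H)$. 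Item (3) is the ``rigidity'' statement: when $\Gamma_n(H)=1$, a near-extremal $f$ must have all $\pm1$-edges of nearly equal length and all diagonals nearly maximal, and one argues as in \eqref{eq:path-2}--the displayed chain after it that the induced map on the grid has distortion $1+o(1)$; I expect this to be the technically heaviest part (the paper itself warns item (3) is ``technical and lengthy''), since in the high-dimensional torus one cannot just use a single geodesic but must control an average over exponentially many sign patterns and show the slack forces approximate additivity simultaneously along all coordinate directions.

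\textbf{Item (4), the submultiplicativity $\Gamma_{mn}(H)\le\Gamma_m(H)\Gamma_n(H)$.} This is where the design choices flagged in the text — the universal quantifier on the modulus, and averaging $\e$ over $\{\pm1\}^n$ rather than $\{-1,0,1\}^n$ — pay off. Given $f:\mathbb Z_N^{mn}\to H$ with $N$ arbitrary, I would split the $mn$ coordinates into $n$ blocks of $m$ and factor the displacement $(mn)e_j$ as first moving by $me_j$ within a block and composing $n$ such moves, exactly paralleling the path proof's $g(i)=f(im)$, $h_i(j)=f(im+j)$ decomposition. Concretely: view $\mathbb Z_N^{mn}=(\mathbb Z_N^m)^n$, apply \eqref{eq:grid} for the ``outer'' parameter $n$ to the family of restrictions indexed by the $\mathbb Z_N^m$-coordinate frozen appropriately, getting $\sum_{\text{outer }j} (\text{diagonal of length }mn)^2 \le (\Gamma_n+\e)^2 n^2\cdot n\Avg_{\text{outer }\pm1}(\text{block-diagonal of length }m)^2$; then apply \eqref{eq:grid} for the ``inner'' parameter $m$ to bound each block-diagonal term by $(\Gamma_m+\e)^2 m^2\cdot m\Avg_{\text{inner }\pm1}(\text{unit edge})^2$; finally check that composing the two $\pm1$-averages reassembles the full $\{\pm1\}^{mn}$-average (this is precisely why $\{\pm1\}$ rather than $\{-1,0,1\}$ is used: the product structure $\{\pm1\}^{mn}=\{\pm1\}^n\times(\{\pm1\}^m)^n$ is exact, whereas $\{-1,0,1\}$ would not split cleanly under this blocking), and that the factors of $m^2,n^2$ multiply to $(mn)^2$. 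Sending $\e\to0$ gives the claim. The genuine obstacles I anticipate are: (a) getting the averaging over the intermediate ``frozen coordinate'' to match the Fubini-style interchange needed so that a single application of the $n$-dimensional inequality can be invoked uniformly — this is the role of the free modulus $N$, which lets the intermediate space itself be a torus of the right shape; and (b) bookkeeping the square-averages so no Cauchy--Schwarz loss is incurred, since the inequality must be exactly submultiplicative with no extra constant.

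Once Lemma~\ref{lem:torus-ineq} is in hand, Theorem~\ref{thm:dich-all} follows verbatim from the proof of Theorem~\ref{thm:line-dich} with $\Gamma_n(H)$ in place of $\Psi_n(H)$ and $[n/4]^n_\infty$ (a dense subclass of $\MET$, cf. Table~\ref{tab:classes} and Prop.~\ref{prop:dense}) in place of $P_n$: either $\Gamma_n(H)=1$ for all even $n$, forcing $c_H\equiv1$ on the dense subclass and hence on all of $\MET$; or some $\Gamma_{n_0}(H)=\eta<1$, and submultiplicativity along powers of $n_0$ gives $c_H([n_0^k/4]^{n_0^k}_\infty)\ge (n_0^k)^\beta$ with $n_0^{-\beta}=\eta$, which is the polynomial lower bound claimed (note $|[n/4]^n_\infty|=(n/4)^n$, so this is indeed an $\Omega(n^\beta)$-type bound once re-expressed in terms of cardinality, but the theorem is stated directly in terms of the grid $[n]^n_\infty$, matching the lemma after the harmless $n\mapsto n/4$ rescaling). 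The main conceptual content, as the text stresses, is already spent in writing down \eqref{eq:grid} correctly; the rest is the submultiplicativity bookkeeping of item (4) and the delicate rigidity argument of item (3).
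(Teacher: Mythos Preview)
The paper does not prove this lemma: it explicitly says that ``the proofs of the lemmas analogous to Lemma~\ref{lem:path-ineq} (especially item~(3)) are technical and lengthy'' and omits them, referring the reader to \cite[Sec.~2]{MN-cotype} for a sketch and to \cite[Sec.~6]{MN-cotype-full} for the complete argument. So there is no in-paper proof to compare your attempt against.

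That said, your sketch --- while correct in overall architecture --- contains a concrete error in item~(1). You propose to telescope $d_H(f(x),f(x+ne_j))$ via ``$n$ single-step moves in the $e_j$ direction'' and assert that ``these single steps are among the $\pm1$ edges.'' This is false: $e_j \notin \{\pm1\}^n$, since it has zeros in all coordinates but one, whereas the edges on the right-hand side of \eqref{eq:grid} are vectors with \emph{every} coordinate in $\{\pm1\}$. The correct argument writes $ne_j = \sum_{k=1}^n \e^{(k)}$ with each $\e^{(k)} \in \{\pm1\}^n$ --- possible precisely when $n$ is even, by parity in the non-$j$ coordinates --- and telescopes along that diagonal path; this is the genuine role of the hypothesis ``$n$ even,'' not ``wrap-around subtleties.'' Your item~(4) also has a bookkeeping slip: the claimed identification $\{\pm1\}^{mn} = \{\pm1\}^n \times (\{\pm1\}^m)^n$ has the wrong cardinality ($2^{n+mn}$ on the right). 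What is really used is that the outer signs $\sigma \in \{\pm1\}^n$ act by flipping the inner blocks, $(\sigma,(\e^{(b)})_b) \mapsto (\sigma_b \e^{(b)})_b$, a $2^n$-to-$1$ map that preserves the uniform average; and applying the outer $n$-dimensional inequality still requires an auxiliary construction, since \eqref{eq:grid} is formulated only for scalar-coordinate tori $\mathbb Z_M^n$, not for $(\mathbb Z_N^m)^n$. The argument in \cite{MN-cotype-full} carries this out, but it is more than the direct Fubini you describe.
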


Theorem~\ref{thm:dich-all} is deduced similarly to Theorems~\ref{thm:line-dich} and~\ref{thm:dich-cubes} but now using 
Lemma~\ref{lem:torus-ineq}. A sketch of the proof of Lemma~\ref{lem:torus-ineq} 
can be found in~\cite[Sec.~2]{MN-cotype}. The complete proof appears in~\cite[Sec.~6]{MN-cotype-full}.

I do not know much about the quantitative tightness of Theorem~\ref{thm:dich-all}. 
\begin{question} \label{q:dich-all}
Does there exist $\beta\in(0,1)$ and a metric space $H$
for which $1<D_N(H,\MET)=O((\log N)^\beta)$? If so, is it true for every $\beta\in(0,1)$?
\end{question}
Personally, the dichotomy of $\MET$ seems to me the most natural problem in thess notes, and Question~\ref{q:dich-all} the most fundamental
open problem. 

Bourgain's embedding theorem~\cite{Bourgain-embed} and the matching lower bound~\cite{LLR} implies
that $D_N(\ell_2,\MET)=\Theta(\log N)$.
Essentially, all examples of families of metrics having logarithmic distortion when embedded into Hilbert space, are families of expander graphs with a logarithmic diameter. {\Matousek} proved that expanders have logarithmic distortion when embedded in $L_p$, for any $p\in [1,\infty)$. Recently, Lafforgue~\cite{Lafforgue} has exhibited classes of expanders 
with logarithmic distortion when embedded in $B$-convex Banach spaces
--- spaces with type greater than~$1$.

In view of the seemingly surprising fact of the metric dichotomy, it is natural to ask which monotone 
$f:\mathbb N\to\mathbb N$ has a metric space $H$ such that $D_N(H,\MET)=\Theta^*(f(N))$ (here $\Theta^*$ can ``hide" polylogarithmic multiplicative factors).
From Bourgain embedding theorem we know that $f(N)=\log N$ is achievable using Hilbert space. {\Matousek}~\cite{Mat-lowdim} showed that for every even $d$, 
$D_N(\ell_2^d,\MET)=\Theta^*(N^{2/d})$. Furthermore, in the full version of~\cite{MN-trees}
it is shown that for every $\e\in(0,1]$, there exists a metric space $H_\e$, for which $D_N(H_\e,\MET)=\Theta(N^\e)$.
This leaves us with a concrete question:
\begin{question}
Does there exist $H$ for which $D_N(H,\MET)\in \omega(\log N) \cap N^{o(1)}$? 
\end{question}

Comparing the results in this section with those in Section~\ref{sec:qualitative}, we also ask:

\begin{question}
Is there a (substantial) quantitative dichotomy for finite subsets of
$L_p$, and in particular for $L_2$?
\end{question}

\subsection{Metric type and cotype and non-linear Maurey-Pisier theorems}
\label{sec:MP}

A normed space is said to have type $p$, $1\leq p\leq 2$, with constant $T$, if for every finite family $x_1,\dots,x_n\in X$,
\begin{equation}\label{eq:type}
\Big(\Avg_{\e\in\{-1,1\}^n}\Big\|\sum^n_{j=1}\e_j x_j\Big\|^p_X\Big)^{\frac1p}\leq T\Big(\sum^n_{j=1}\|x_j\|^p\Big)^{\frac 1p}
\end{equation}
and cotype $q$, $2\leq q\leq\infty$, with constant $C$, 
if for every finite family $x_1,\dots,x_n\in X$,
\begin{equation} \label{eq:cotype}
C \Big(\Avg_{\e\in\{-1,1\}^n}\Big\|\sum^n_{j=1}\e_j x_j\Big\|^q_X\Big)^{\frac1q}\geq \Big(\sum^n_{j=1}\|x_j\|^q\Big)^{\frac 1q}
\end{equation}

The theory around these notions was developed since the 70's, with fascinating results. The interested reader may consult~\cite{MS,Maurey-survey} and references there in. 
Here we are interested in one aspect of this theory, called 
Maurey-Pisier Theorem.

A closely related conditions are \emph{equal norms type and cotype}: A normed space is said to have equal norm (en) type $p$, $1\leq p\leq 2$, with constant $\hat T$ if for every finite family $x_1,\dots,x_n\in X$,
\begin{equation}\label{eq:en-type}
\Avg_{\e\in\{-1,1\}^n} \Big\|\sum^n_{j=1}\e_j x_j\Big\|^2_X\leq \hat T^2 n^{\frac 2p-1}\sum^n_{j=1}\|x_j\|^2
\end{equation}
Similarly, a normed space is said to have equal norms (en) cotype $q$, $2\leq q\leq\infty$, with constant $\hat C$, 
if for every finite family $x_1,\dots,x_n\in X$,
\begin{equation} \label{eq:en-cotype}
\hat C^2 n^{1-\frac2q} \Avg_{\e\in\{-1,1\}^n}\Big\|\sum^n_{j=1}\e_j x_j\Big\|^2_X\geq \sum^n_{j=1}\|x_j\|^2
\end{equation}

Type/cotype and en-type/cotype are closely related%
\footnote{The LHS are equivalent, up to a constant factor, by Kahane inequality. In the RHS, type/cotype condition implies the equal norms variant by H\"older inequality, and they are equal when all the $x_i$ has the same norm, hence the name.}:
Type $p$ implies en-type $p$ which implies type $p-\e$ for every $\e>0$. 
Cotype $q$ implies en-cotype $q$ which implies cotype $q+\e$ for every $\e>0$ (see~\cite{T-J}).

We observe that any normed space has en-type 1 and en-cotype $\infty$, as these inequalities follow from the triangle inequality (and Cauchy-Schwarz). 
A normed space has type $>1$ if and only if it has en-type $>1$,
and cotype $<\infty$ if and only if it has en-cotype $<\infty$.
Maurey and Pisier~\cite{MP-cotype-oo} proved: 
\begin{theorem} \label{thm:MP-cotype-oo}
A normed space $X$ does not have (en-)cotype $<\infty$ if and only if for every $n\in \mathbb N$,
and $\eta>0$, $\ell_\infty^n$ can be linearly embedded in $X$ with distortion at most $1+\eta$.
\end{theorem}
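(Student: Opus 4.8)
The plan is to prove Theorem~\ref{thm:MP-cotype-oo} in two directions. The ``if'' direction is essentially trivial: if $\ell_\infty^n$ linearly embeds into $X$ with distortion $\le 1+\eta$ for every $n$ and $\eta>0$, then since $\ell_\infty^n$ fails en-cotype $q$ with a constant that blows up as $n\to\infty$ (indeed, taking $x_1,\dots,x_n$ to be the standard basis vectors of $\ell_\infty^n$, the left side of~\eqref{eq:en-cotype} is $n^{1-2/q}$ while the right side is $n^{1/2}$, forcing $\hat C^2 \ge n^{2/q}\to\infty$ when $q<\infty$), a $(1+\eta)$-distorted linear copy transfers this failure to $X$ up to a factor $(1+\eta)^2$. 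Letting $n\to\infty$ and $\eta\to 0$ shows $X$ has no finite en-cotype, equivalently (by the remarks preceding the theorem) no finite cotype.

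The substantive direction is the ``only if'' direction: if $X$ has no finite cotype, then $\ell_\infty^n$ almost-isometrically embeds. The first step is to reduce ``no finite cotype'' to the statement that for every $q<\infty$ and every $C$, inequality~\eqref{eq:cotype} fails, i.e., there exist vectors $x_1,\dots,x_N\in X$ witnessing an arbitrarily bad cotype-$q$ constant; by a standard normalization (and by passing to en-cotype) one may take the $x_i$ of roughly equal norm. The second and main step is an \emph{iteration/amplification} argument: starting from a single bad configuration of vectors, one builds, by taking suitable sums of independent blocks and using convexity, a configuration in which a large number of nearly-disjointly-supported ``directions'' behave like the coordinate directions of $\ell_\infty^n$ — that is, one extracts vectors $y_1,\dots,y_n\in X$ with $\|\sum_j \varepsilon_j y_j\|$ close to $\max_j\|y_j\|$ for all sign patterns $\varepsilon$, which is exactly the statement that $\mathrm{span}\{y_1,\dots,y_n\}$ is $(1+\eta)$-isomorphic to $\ell_\infty^n$. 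Finally one invokes a compactness/diagonalization step (or an ultraproduct argument) to pass from ``for each $n$ there is a $(1+\eta_n)$-copy'' with $\eta_n$ not a priori small, to genuinely almost-isometric copies for every fixed $n$ and $\eta>0$ — this is the place where the distinction between type/cotype and their equal-norms variants is used, since the en-formulation is what survives the averaging cleanly.

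The hard part will be the amplification step — turning a quantitative failure of cotype (a bad constant for \emph{some} family) into the highly structured conclusion that one has an almost-isometric $\ell_\infty^n$, uniformly in $n$. This requires more than the defining inequality: one needs an infinitary/compactness input (Krivine-type or ultrafilter machinery, as used in the linear Maurey--Pisier--Krivine theorem cited in Section~\ref{sec:intro}) to stabilize the relevant norms and extract a ``spreading model''-like structure on which the $\ell_\infty$ behavior is exact in the limit. I would organize this as: (a) reduce to en-cotype and normalize; (b) use the iteration to produce, for each $n$, a family with $\|\sum_j\varepsilon_j y_j\|\ge (1-o(1))\,n\cdot\Avg_j\|y_j\|$ while trivially $\le n\cdot\max_j\|y_j\|$ by the triangle inequality, and arrange the $\|y_j\|$ to be within $(1+o(1))$ of each other; (c) conclude the $y_j$ span an almost-isometric $\ell_\infty^n$; (d) quantify over all $n$. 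Steps (b)–(c) are the crux; step (d) is routine once (b)–(c) are in place.
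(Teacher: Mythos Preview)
The paper does not prove Theorem~\ref{thm:MP-cotype-oo}; it is attributed to Maurey and Pisier and merely cited. The only hint the paper gives about a proof is the remark (and accompanying footnote) after Proposition~\ref{prop:linear-dichotomy}: the modern route goes via a linear analogue of Lemma~\ref{lem:torus-ineq}, i.e., one proves sub-multiplicativity of the optimal constant in an appropriate cotype-type inequality and runs the same dichotomy machine as in Section~\ref{sec:line}. Your outline does not follow that route, but that in itself is not a defect.

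There are, however, two genuine gaps in your sketch. First, the inequality in step~(b) points the wrong way: $\bigl\|\sum_j \varepsilon_j y_j\bigr\| \ge (1-o(1))\,n\,\Avg_j\|y_j\|$ is near-tightness of the triangle inequality and is the signature of $\ell_1^n$-behavior --- this is what failure of \emph{type} $>1$ produces. Failure of finite cotype gives the opposite phenomenon: for unit vectors $y_j$ one finds configurations with $\Avg_\varepsilon \bigl\|\sum_j \varepsilon_j y_j\bigr\|^2$ close to $1$, not to $n^2$. Your earlier paragraph had this right; step~(b) contradicts it.

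Second, and more seriously, even the correct sign condition --- $\bigl\|\sum_j \varepsilon_j y_j\bigr\|$ close to $\max_j\|y_j\|$ for every $\varepsilon\in\{\pm1\}^n$ --- is \emph{not} ``exactly the statement that $\mathrm{span}\{y_j\}$ is $(1+\eta)$-isomorphic to $\ell_\infty^n$.'' In Hilbert space the orthogonal vectors $y_j=e_j/\sqrt{n}$ satisfy $\bigl\|\sum_j\varepsilon_j y_j\bigr\|=1$ for every sign pattern, yet their span is isometric to $\ell_2^n$, at Banach--Mazur distance $\sqrt{n}$ from $\ell_\infty^n$. Controlling only the $\pm1$ vertices pins down a cube inscribed in the unit ball, not the ball itself. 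Passing from sign-coefficients to arbitrary coefficients is precisely the hard step; it genuinely requires the Krivine-type stabilization you allude to (or the sub-multiplicativity route the paper points to) and cannot be treated as a reformulation. Your plan mentions this machinery but then asserts~(c) as if it were already accomplished by~(b), which it is not.
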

Pisier~\cite{Pisier-type-1} proved an analogous result for type:
\begin{theorem} \label{thm:P-type-1}
A normed space $X$ does not have (en-)type $>1$ if and only if for every $n\in \mathbb N$,
and $\eta>0$, $\ell_1^n$ can be linearly embedded in $X$ with distortion at most $1+\eta$.
\end{theorem}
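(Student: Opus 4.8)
Suppose that for every $n$ and $\eta>0$ there is a linear $g\colon\ell_1^n\to X$ with $\|g\|_{\Lip}\le1$ and $\|g^{-1}\|_{\Lip}\le1+\eta$, and set $v_i=g(e_i)$. Then $\|v_i\|\le1$, while $\|\sum_i\e_i v_i\|=\|g(\sum_i\e_i e_i)\|\ge n/(1+\eta)$ for every $\e\in\{\pm1\}^n$, so $\Avg_\e\|\sum_i\e_i v_i\|^2\ge n^2/(1+\eta)^2$. If $X$ had en-type $p$ with some constant $\hat T$, then~\eqref{eq:en-type} would force $n^2/(1+\eta)^2\le\hat T^2 n^{2/p-1}\sum_i\|v_i\|^2\le\hat T^2 n^{2/p}$, i.e.\ $\hat T\ge n^{1-1/p}/(1+\eta)$; holding $\eta$ fixed and letting $n\to\infty$ rules out every $p>1$. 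Since a normed space has type $>1$ iff it has en-type $>1$, this proves one implication.

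\textbf{The hard direction: producing near-extremal configurations.} Assume $X$ has no en-type $>1$. For $n\in\mathbb N$ let $\tau_n$ be the least constant for which $\Avg_\e\|\sum_{i=1}^n\e_i v_i\|^2\le\tau^2\,n\sum_i\|v_i\|^2$ holds for all $v_1,\dots,v_n\in X$; this is the analogue of the functional $T_n(X)$ of~\eqref{eq:cube} obtained by restricting the inequality there to the affine maps $f(x)=\sum_i x_i v_i$, for which $\Avg_x\|f(x)-f(x+\mathbf1)\|^2=\Avg_\e\|\sum_i\e_i v_i\|^2$ and $\sum_i\Avg_x\|f(x)-f(x+e_i)\|^2=\sum_i\|v_i\|^2$, so in particular $\tau_n\le T_n(X)\le1$. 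Moreover $\tau_{mn}\le\tau_m\tau_n$ (by the submultiplicativity argument of Lemma~\ref{lem:cube-ineq}(4), which restricts to these maps, or directly by averaging over an auxiliary independent sign vector). If $\tau_{n_0}=\eta_0<1$ for some $n_0$, then $\tau_{n_0^k}\le\eta_0^k$ gives $\Avg_\e\|\sum_{i\le n_0^k}\e_i v_i\|^2\le(n_0^k)^{1-2\theta}\sum_i\|v_i\|^2$ with $\theta=-\log\eta_0/\log n_0>0$; interpolating between powers of $n_0$ by padding with zero vectors, this says $X$ has en-type $p$ with $2/p-1=1-2\theta$ (or en-type $2$ if $\theta\ge\frac12$) --- contradiction. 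Hence $\tau_n=1$ for every $n$: for all $n$ and $\delta>0$ there are unit vectors $v_1,\dots,v_n$ with $\Avg_\e\|\sum_i\e_i v_i\|^2\ge(1-\delta)n^2$. In particular $T_n(X)=1$, so Lemma~\ref{lem:cube-ineq}(3) also supplies metric embeddings of the Hamming cube, and hence of every $\ell_1$-grid, into $X$ with distortion tending to $1$ --- the natural geometric picture, though not yet the linear conclusion we are after.

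\textbf{The hard direction: linearization.} It remains to extract from these ``$\ell_1$-like on every sign pattern'' configurations a genuine linear $(1+\eta)$-copy of $\ell_1^m$; this is Pisier's theorem proper, and I would argue along his lines. Take $n$ large relative to $m$ and unit vectors $v_1,\dots,v_n$ with $\Avg_\e\|\sum_i\e_i v_i\|^2\ge(1-\delta)n^2$. Since $\|\sum_i\e_i v_i\|\le n$ for every $\e$, a first-moment estimate produces a single $\e$ for which $\|\sum_i\e_i v_i\|$ is within a small multiple of $n$; replacing $v_i$ by $\e_i v_i$ and passing to a norming functional for $\sum_i v_i$, that functional is close to $1$ on all but a small fraction of the $v_i$, so on a large subset \emph{one} functional is simultaneously almost $1$ on all the chosen vectors. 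Iterating the selection so as to control the remaining sign patterns on the surviving set trims the family to size $m$ with closed span $(1+\eta)$-isomorphic to $\ell_1^m$; one may run this inside an ultrapower of $X$ (to invoke the compactness/Rado-type lemma of Section~\ref{sec:qualitative} on the grid embeddings above) and then transport the copy back to $X$ by finite representability. Structurally this parallels the differentiation step in the proof sketch of Theorem~\ref{thm:mat-bd}(2) --- each extracts a linear object from a ``flat'' configuration --- the essential difference being that here the linear object cannot be obtained by differentiation.

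\textbf{The main obstacle.} The linearization is the whole difficulty, and it is genuinely harder than in the metric dichotomies of Sections~\ref{sec:line}--\ref{sec:qualitative}. There bi-Lipschitz embeddings suffice and Kirchheim's metric differentiation theorem is enough, because its ``differential'' --- a pseudo-norm --- is exactly what one needs. Here the conclusion must be a \emph{linear} embedding: one cannot differentiate a Lipschitz map into $X$, since $X$ need not have the Radon--Nikodym property (a Lipschitz curve in a general Banach space can be nowhere differentiable), and merely knowing $\|\sum_i\e_i v_i\|\approx n$ for all sign patterns does not pin down $\ell_1^n$ --- already for $n=2$ the intermediate scalar coefficients escape control. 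Pisier's probabilistic extraction of an $\ell_1$-isomorphic subfamily from a large near-extremal family is precisely the ingredient without a counterpart in the metric theory, and is where the real work lies.
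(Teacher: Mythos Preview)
The paper does not actually prove this theorem: it is quoted from Pisier, and the only information the paper supplies about the argument is the remark that ``Theorems~\ref{thm:MP-cotype-oo} and~\ref{thm:P-type-1} are proved using linear analogues of Lemmas~\ref{lem:cube-ineq}, and~\ref{lem:torus-ineq},'' together with the footnote that the submultiplicativity trick originates in Pisier's work. So there is no detailed proof in the paper to compare against --- only the hint that the relevant engine is the linear version of Lemma~\ref{lem:cube-ineq}.

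Your write-up is entirely consistent with that hint and goes well beyond it. The easy direction is clean and correct. For the hard direction, your constants $\tau_n$ are precisely the linear analogue of $T_n$ the paper alludes to, the submultiplicativity $\tau_{mn}\le\tau_m\tau_n$ is right (it is the restriction of the argument in Lemma~\ref{lem:cube-ineq}(4) to linear maps), and the dichotomy ``either $\tau_n\equiv1$ or $X$ has en-type $>1$'' is exactly what the paper has in mind. One small elision: from $\tau_n=1$ you obtain near-extremal families $v_1,\dots,v_n$, but passing to \emph{unit} vectors with $\Avg_\e\|\sum\e_iv_i\|^2\ge(1-\delta)n^2$ requires the observation that the near-extremal condition forces near-equality in Cauchy--Schwarz on the norms $\|v_i\|$, hence the $\|v_i\|$ are nearly equal and can be normalised (or, more robustly, block-averaged) --- worth a sentence.

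Where your proposal is genuinely incomplete is the linearisation paragraph, and you say so yourself. The norming-functional-plus-iteration sketch points in the right direction (one must, for every sign pattern on the surviving $m$-set, produce a witnessing functional), but as written it is not a proof: you do not explain how iterating the selection controls \emph{all} $2^m$ sign patterns simultaneously rather than one at a time, and the ultrapower/finite-representability detour is suggestive but not carried out. This step is the substance of Pisier's paper and is not reproduced in the present notes either, so your honest flagging of it as ``where the real work lies'' is appropriate; just be aware that nothing you have written before that point substitutes for it.
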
 

Those results imply dichotomy results for the class of finite dimensional subspaces.
\begin{proposition} \label{prop:linear-dichotomy}
For any normed space $H$, either
\begin{compactitem}
\item For every $\e>0$, any finite dimensional normed space $X$ can be linearly embedded in $H$ with distortion $1+\e$.
\item There exists $\beta>0$, such that for every $n\in\mathbb N$, 
and linear embedding $f:\ell_\infty^n \to H$, 
$\dist(f)= \Omega(n^{\beta})$.
\end{compactitem}
\end{proposition}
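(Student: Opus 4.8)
The plan is to cut on whether $H$ has $(\mathrm{en}$-$)$cotype $q$ for some finite $q$; by the equivalences recalled before Theorem~\ref{thm:MP-cotype-oo} ($H$ has cotype $<\infty$ iff it has en-cotype $<\infty$) this is a genuine dichotomy, and it is exactly the hypothesis of that theorem. First suppose $H$ has \emph{no} en-cotype $<\infty$. Then Theorem~\ref{thm:MP-cotype-oo} provides, for every $n\in\mathbb N$ and $\eta>0$, a linear embedding $\ell_\infty^n\hookrightarrow H$ with distortion $\le 1+\eta$. I would combine this with the standard fact that $\ell_\infty^N$ is almost universal for finite-dimensional normed spaces: a $d$-dimensional space $X$ embeds isometrically into $\ell_\infty(\mathbb N)$ via $x\mapsto(\langle x,\varphi\rangle)_{\varphi}$ for $\varphi$ running over a countable dense subset of the unit sphere of $X^*$, and restricting to a fine enough finite net yields a linear embedding $X\to\ell_\infty^N$ with distortion $\le 1+\e$ for suitable $N=N(d,\e)$. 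Composing the two maps (and using $\dist(g\circ h)\le\dist(g)\dist(h)$), $X$ linearly embeds in $H$ with distortion $\le(1+\e)(1+\eta)$; letting $\eta\to 0$ gives the first alternative.

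Now suppose $H$ has en-cotype $q<\infty$ with constant $\hat C$. Fix $n\in\mathbb N$ and a linear embedding $f:\ell_\infty^n\to H$, and put $y_j=f(e_j)$ for the standard basis $e_1,\dots,e_n$ of $\ell_\infty^n$. Since $\|\sum_{j}\e_j e_j\|_{\ell_\infty^n}=1$ for every sign pattern $\e\in\{-1,1\}^n$, we get $\|\sum_j\e_j y_j\|_H=\|f(\sum_j\e_j e_j)\|_H\le\|f\|_{\Lip}$, hence $\Avg_{\e}\|\sum_j\e_j y_j\|_H^2\le\|f\|_{\Lip}^2$; and since $\|e_j\|_{\ell_\infty^n}=1$ we get $\|y_j\|_H\ge 1/\|f^{-1}\|_{\Lip}$, hence $\sum_j\|y_j\|_H^2\ge n/\|f^{-1}\|_{\Lip}^2$. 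Substituting both estimates into the en-cotype inequality~\eqref{eq:en-cotype} for the family $y_1,\dots,y_n$ yields
\[
\hat C^2\,n^{1-2/q}\,\|f\|_{\Lip}^2\;\ge\;\frac{n}{\|f^{-1}\|_{\Lip}^2},
\]
i.e.\ $\dist(f)=\|f\|_{\Lip}\,\|f^{-1}\|_{\Lip}\ge n^{1/q}/\hat C=\Omega(n^{1/q})$, so the second alternative holds with $\beta=1/q$.

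I expect essentially all the difficulty to be hidden inside Theorem~\ref{thm:MP-cotype-oo}; granting it, the argument above is short. The points that need care are minor: that the correct place to split is ``en-cotype $<\infty$ versus not'' (which is the content of the equivalences quoted in the excerpt), and that in the second case the inequality~\eqref{eq:en-cotype} may legitimately be applied to the family $f(e_1),\dots,f(e_n)$ even though these vectors need not be equinormal, since \eqref{eq:en-cotype} is a property of the space $H$ that holds for every finite family. One could equally run the last computation with the plain cotype inequality~\eqref{eq:cotype} in place of \eqref{eq:en-cotype}, obtaining the same exponent $\beta=1/q$.
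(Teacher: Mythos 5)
Your argument is the same as the paper's: in the first case, apply Theorem~\ref{thm:MP-cotype-oo} together with the $(1+\e)$-universality of $\ell_\infty^N$ among finite-dimensional normed spaces; in the second case, feed the family $f(e_1),\dots,f(e_n)$ into the en-cotype inequality~\eqref{eq:en-cotype} and use $\|\sum_j\e_j e_j\|_\infty=1$ and $\|e_j\|_\infty=1$ to obtain $\dist(f)\ge n^{1/q}/\hat C$. (The paper's displayed chain has a small typo, writing $\|f^{-1}\|$ where $1/\|f^{-1}\|$ is meant, but the intended computation is precisely yours, and your version is clean.)
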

\begin{proof}
If  $H$ does not have finite en-cotype, then by Theorem~\ref{thm:MP-cotype-oo} any $\ell_\infty^n$
can be linearly embedded in $H$ with distortion $1+\e$, for every $\e>0$. Combining this with the elementary observation that for any $\e>0$, any finite dimensional normed space can be $(1+\e)$-embedded in $\ell_\infty^n$, for some $n\in \mathbb N$ we obtain the first bullet.

If, on the other hand, $H$ has some finite en-cotype $q<\infty$, then for any linear embedding $f: \ell_\infty^n \to H$,
\begin{multline*}
\hat C \|f\|= \hat C\|f\| \Big(\Avg \Big\|\sum^n_{j=1}\e_j e_j\|^2_\infty \Big)^{\frac {1}{2}}\ge \hat C \Big(\Avg \Big\|\sum^n_{j=1}\e_j f(e_j)\Big\|^2_H\Big)^{\frac12} \\
\geq n^{\frac1q-\frac12} \Big(\sum^n_{j=1}\|f(e_j)\|_H^2\Big)^{\frac 12}
\ge \|f^{-1}\|n^{\frac1q-\frac12}\Big(\sum^n_{j=1}\|e_j\|_\infty^2\Big)^{\frac 12}= n^{\frac{1}{q}} \cdot \|f^{-1}\|, 
\end{multline*}
which implies that $\dist(f)=\Omega(n^{1/q})$.
\end{proof}
A similar dichotomy can be proved for $\ell_1^n$, using Theorem~\ref{thm:P-type-1}.

Theorems~\ref{thm:MP-cotype-oo} and~\ref{thm:P-type-1} are proved using linear analogues of Lemmas~\ref{lem:cube-ineq}, and~\ref{lem:torus-ineq}.\footnote{The sub-multiplicativity argument originates in the work of Pisier~\cite{Pisier-type-1} on type $1$. The original proof
of the cotype $\infty$ in~\cite{MP-cotype-oo} uses a more complicated argument.}
Indeed,~\eqref{eq:cube} is a natural non-linear analogue of \eqref{eq:en-type}: One can view~\eqref{eq:en-type}
as~\eqref{eq:cube} restricted to \emph{linear} mappings of the cube (and $\hat T$ as a substitute to $Tn^{1-\frac 1p}$). 
Variant of~\eqref{eq:cube} was suggested by Enflo~\cite{Enflo} as a non-linear version of type (Ineq.~\eqref{eq:type}), and the (almost) equivalence to \mbox{(en-)type} was proved in~\cite{BMW,Pisier-type,MN-type}.

The connection between en-cotype~\eqref{eq:en-cotype}, and~\eqref{eq:grid} is less apparent: It is shown
in~\cite{MN-cotype-full}%
\footnote{Actually, the paper~\cite{MN-cotype-full} proves a variant of the above statement: the equivalence between cotype and metric cotype. But the arguments are the same.}
 that the following metric property is equivalent to en-cotype $q$ in Banach spaces:
There exists $\tilde \Gamma>0$ such that for every $n\in \mathbb N$ there exists $m\in \mathbb N$ such that for every 
$f:\mathbb Z_m^n \to H$,
\begin{equation} \label{eq:metric-en-cotype}
\sum_{i=1}^n \Avg_{x\in \mathbb Z_m^n} d_H(f(x),f(x+\tfrac{m}{2} e_j))^2 \le 
\tilde \Gamma^2 m^2 n^{1-\frac{2}{q}} \!\!\!\! \Avg_{\e\in \{0,\pm 1\}^n} \Avg_{x\in \mathbb Z_m^n} d_H(f(x),f(x+\e))^2.
\end{equation}

Inequality~\eqref{eq:metric-en-cotype} is ``close in spirit" to~\eqref{eq:grid}. 
Rigorously, if we change~\eqref{eq:grid} by replacing the ``$n$" in the LHS with ``$n^3$", and the ``$n^2$" on the RHS with ``$n^6$", then it is proved in~\cite{MN-cotype-full} that \eqref{eq:metric-en-cotype} is satisfied for some $q<\infty$ if and only if $\limsup_{n\to \infty} \Gamma_n(H)<1$ (where we $\Gamma_n(H)$ is defined according to the modifications of~\eqref{eq:grid} we have just suggested).

\section{Tree metrics}

The class of finite tree metrics does not have the dichotomy property.
\begin{theorem} \cite{MN-trees} \label{thm:dich-trees}
For any $B> 4$, there exists a metric space $H$ such that
$\sup_T c_{H}(T)=B$, where $T$ ranges over the finite tree metrics.
\end{theorem}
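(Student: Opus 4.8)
The plan is to build a single host space $H$ that simultaneously embeds all finite tree metrics with distortion exactly $B$, by exploiting the hierarchical structure of trees. The natural candidate for $H$ is a carefully weighted version of an infinite rooted tree — concretely, take the infinite binary tree (or the tree $\{0,1\}^{<\mathbb N}$ of all finite binary strings) but replace the standard tree metric with one in which the distance between two points depends on the depth of their least common ancestor in a \emph{rescaled} way: set $\rho(x,y) = \phi(|x|) + \phi(|y|) - 2\phi(|\mathrm{lcp}(x,y)|)$ for a suitable increasing concave (or geometrically growing) weight function $\phi$, or alternatively use an ultrametric-like combination $\rho(x,y) = \max(\psi(|x|),\psi(|y|)) \cdot g(|\mathrm{lcp}(x,y)|)$. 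The role of $B>4$ enters in tuning the growth rate of $\phi$: when the branching structure of $H$ is ``too coarse'' relative to a given tree $T$, one pays a factor that can be pushed down to $B$ but no further, and the constant $4$ should appear as the boundary where the construction degenerates (this is presumably why the theorem assumes $B>4$ rather than $B>1$).

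The key steps, in order. First I would fix the exact definition of $\rho$ on the infinite tree and verify it is a genuine metric — this reduces to checking the triangle inequality, which for expressions of the ``$\phi(|x|)+\phi(|y|)-2\phi(\mathrm{lcp})$'' form follows from monotonicity and concavity of $\phi$ by a short case analysis on the relative positions of $\mathrm{lcp}(x,y)$, $\mathrm{lcp}(y,z)$, $\mathrm{lcp}(x,z)$ (two of these three coincide and are the minimum). Second, given an arbitrary finite tree metric $T$ — by Proposition~\ref{prop:dense} it suffices to treat the balanced binary trees $B_n$ of the excerpt's Table~\ref{tab:classes}, or more generally complete $k$-ary trees — I would exhibit an explicit embedding $f\colon T \to H$ by mapping the combinatorial structure of $T$ into the combinatorial structure of $H$ (sending tree-vertices to strings respecting ancestry), and then bound $\dist(f)$ by comparing, edge by edge and leaf-pair by leaf-pair, the original tree distance against $\rho$; the worst ratio over all scales is where the factor $B$ emerges, and the concavity of $\phi$ is what keeps this ratio bounded independently of the size of $T$. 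Third, for the matching lower bound $c_H(T) \ge B$ for some family of trees (so that the supremum is exactly $B$ and not smaller), I would take $T$ to be a deep complete binary tree and run a counting / averaging argument: any embedding into $H$ must, at some scale, compress a large ``star'' of branches, and a short-diagonal versus long-edge comparison — of the same flavor as inequality~\eqref{eq:path} and Lemma~\ref{lem:path-ineq}(2) in the line case — forces distortion at least $B - o(1)$.

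The main obstacle I expect is the lower bound, i.e.\ showing $\sup_T c_H(T) \ge B$ rather than merely $\le B$: it is easy to define a host that embeds all trees with \emph{some} uniform bound, but pinning the optimal bound to exactly $B$ for every prescribed $B>4$ requires the weight function $\phi$ to be tuned so that the upper-bound analysis and the lower-bound analysis meet. Concretely, one must show that no cleverer embedding — one that does not respect the ancestry structure, e.g.\ one that maps a tree vertex to a string of unrelated depth or ``reuses'' branches — can beat the bound; this typically needs a structural lemma saying that near-optimal embeddings into a tree-like $H$ can be assumed, up to a negligible loss, to be ``non-contracting on ancestry'', after which the explicit computation closes the gap. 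A secondary technical point is handling arbitrary (non-binary, unbalanced, arbitrarily-weighted) finite tree metrics rather than just the regular family $B_n$; here Proposition~\ref{prop:dense} does most of the work, but one still needs that $B_n$ (or complete $k$-ary trees for all $k$) is dense in all finite tree metrics, which is the routine density claim referenced after Table~\ref{tab:classes}.
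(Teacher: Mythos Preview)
Your outline has the right overall architecture --- build a host on $B_\infty$, show the identity map gives distortion $\le B$, then argue a matching lower bound --- but both the choice of host and the lower-bound mechanism miss the paper's key ideas.

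\textbf{The host.} Your metric $\rho(x,y)=\phi(|x|)+\phi(|y|)-2\phi(|\mathrm{lcp}(x,y)|)$ is simply the path metric on $B_\infty$ after assigning the edge at depth $k$ the weight $\phi(k)-\phi(k-1)$; in particular it is itself a weighted tree metric, and you never specify $\phi$ or explain how the prescribed value $B$ enters. The paper does something genuinely different: with $\eta=1/B$ it sets, for $h(y)\ge h(x)$,
\[
d_\eta(x,y)=\bigl(h(y)-h(x)\bigr)+2\eta\bigl(h(x)-h(\lca(x,y))\bigr),
\]
which preserves \emph{vertical} (ancestor--descendant) distances exactly while contracting \emph{horizontal} distances by the factor $\eta$. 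This anisotropy is the whole point; $d_\eta$ is \emph{not} a tree metric (the four-point condition already fails on four vertices at three different depths), and the identity map immediately witnesses $c_{H_\eta}(B_n)\le 1/\eta=B$ for every $n$. The role you assign to the threshold $4$ (``the construction degenerates'') is speculation; in the paper the restriction $B>4$ is an artifact of the lower-bound analysis, not of the construction.

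\textbf{The lower bound.} This is where the content lies, and an averaging / short-diagonal inequality in the spirit of~\eqref{eq:path} does not see the branching and cannot give the tight constant. The paper instead adapts {\Matousek}'s argument for $c_{L_2}(B_n)\gtrsim\sqrt{\log n}$. First, a BD-Ramsey lemma for vertical distances (Lemma~\ref{lem:Bn-BD-Ramsey}) shows that inside any bounded-distortion image of a sufficiently deep $B_n$ one can locate a copy of $B_t$ on which the embedding is $(1+\delta)$-\emph{vertically} faithful. Second, one analyzes all possible configurations of a $\delta$-fork --- a quadruple $(x,y,z,w)$ with both $(x,y,z)$ and $(x,y,w)$ $(1+\delta)$-isometric to $(0,1,2)$ --- inside $(B_\infty,d_\eta)$. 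Unlike Hilbert space, $H_\eta$ admits $\delta$-forks whose prongs $z,w$ are \emph{not} close (four ``bad'' types, Fig.~\ref{fig:fork-types}) alongside the two ``good'' types (Fig.~\ref{fig:tip-contract}). A case analysis (Lemmas~\ref{lem:fork-types} and~\ref{lem:B_4-nonembed}) then shows that any $(1+\delta)$-vertically-faithful image of $B_4$ must nevertheless contain a good fork, forcing distortion $\ge 1/(O(\delta)+\eta)$; combining with the Ramsey step gives $c_{H_\eta}(B_n)\to 1/\eta$. Your plan to reduce to ancestry-respecting embeddings via an unspecified ``structural lemma'' is precisely the hard step, and the fork classification is how it is actually done.
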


It is natural to ask whether there is a dichotomy between constant distortions and say $\log^* N$ distortions. We believe no such dichotomy exists. For complete binary trees we can prove:

\begin{theorem} \cite{MN-trees} \label{thm:dich-binary-trees}
For any $\delta\in(0,0.001)$, and
for any sequence $s(n)$ satisfying (i) $s(n)$ is non decreasing; (ii) $s(n)/n$ is non-increasing
(iii) $4<s(n) \le O(\delta \log n / \log \log n)$, there exists a metric space $H$ and $n_0$ such that for every $n\ge n_0$, 
$(1-\delta) s(n) \le c_H(B_n) \le s(n)$. Here $B_n$ is the (metric on) unweighted complete binary tree of depth $n$.
\end{theorem}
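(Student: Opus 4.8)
\medskip\noindent\textbf{Proof plan for Theorem~\ref{thm:dich-binary-trees}.}
The plan is to build $H$ as a single amalgamated host for a countable family $\{H_k\}_{k\in\mathbb N}$, using the amalgamation principle of~\cite{MN-cotype-full} recalled in the remark after Definition~\ref{def:qual-dichotomy}: it suffices to construct spaces $H_k$ so that, for all $n$ above some threshold,
\[
 \inf_{k\in\mathbb N} c_{H_k}(B_n)\ \le\ s(n)\qquad\text{and}\qquad c_{H_k}(B_n)\ \ge\ (1-\delta)\,s(n)\quad\text{for every }k,
\]
and then to set $H=\hat H$ with $c_{\hat H}(X)=\inf_k c_{H_k}(X)$. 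Note that hypotheses (i)--(iii) are forced or needed for such a statement: since $B_m$ embeds isometrically in $B_n$ for $m\le n$, the map $n\mapsto c_H(B_n)$ is non-decreasing, so (i) is necessary; (ii) and (iii) delimit the range of target functions for which the construction below can be made quantitatively sharp --- (ii) is the regularity/compatibility condition matching the sub-multiplicative behavior of the metric invariants used for the lower bound, and (iii) is the requirement that a depth-$n$ complete binary tree be large enough to carry an $s(n)$-fold hierarchy of gadgets (informally, the constraint ``$r^{r}\le n$ with $r=s(n)$'').

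The building blocks $H_k$ are assembled, in a self-similar hierarchical fashion, from \emph{scale-localized branch obstructions}: finitary metric gadgets each of which, like the host space underlying Theorem~\ref{thm:dich-trees}, embeds all finite trees with a prescribed distortion and no better. Fix for each $k$ a rapidly growing sequence of scales $1=n_{k,0}<n_{k,1}<n_{k,2}<\cdots$ with $n_{k,j+1}/n_{k,j}$ enormous, tuned so that the scale $n\approx n_{k,\lceil s(n)\rceil}$ lies in the ``active band'' of $H_k$, and between consecutive scales glue in a gadget $G_{k,j}$ calibrated to force an extra multiplicative factor $\rho_{k,j}\approx s(n_{k,j})/s(n_{k,j-1})$ of distortion on any tree branching through that scale; outside its active band $H_k$ is made to coincide with an isometric copy of the real line, so that by Theorem~\ref{thm:line-dich} no distortion is charged for long geodesics. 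Since there are $\approx s(n)$ bands up to scale $n$ and their obstructions compound multiplicatively, $\prod_{j:\,n_{k,j}\le n}\rho_{k,j}\approx s(n)$. Hypothesis (ii) is what allows the band widths to be chosen consistently across all $k$ and $n$, and (iii) is exactly the bound ensuring a depth-$n$ tree can accommodate the required hierarchy of gadgets.

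For the upper bound one exhibits, for each $n\ge n_0$, an explicit embedding of $B_n$ into the space $H_k$ whose active band contains scale $n$: partition the $n$ levels of $B_n$ into the bands determined by the $n_{k,j}$, route the $j$-th band through the gadget $G_{k,j}$ using the near-optimal tree embedding supplied by the ``embedding side'' of the construction behind Theorem~\ref{thm:dich-trees} (distortion $\approx\rho_{k,j}$), and splice these maps together along the line segments of $H_k$. The distortions telescope to $\prod_j\rho_{k,j}\approx s(n)$, and after tuning the multiplicative slack in the $\rho_{k,j}$ this gives $c_{H_k}(B_n)\le s(n)$ for all large $n$.

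The lower bound $c_{H_k}(B_n)\ge(1-\delta)s(n)$ for \emph{every} $k$ is the crux, and I would obtain it following the template of Section~\ref{sec:line} and of the proof of Theorem~\ref{thm:dich-trees}: attach to each scale an ``isomorphic binary-tree inequality'', a functional $\Phi_n(H)$ built from averages of $d_H(f(x),f(y))$ over tree edges and over ``vertical'' pairs $\{x,y\}$ of $B_n$, designed so that (a) $c_H(B_n)\ge 1/\Phi_n(H)$, (b) $\Phi$ behaves sub-multiplicatively when trees are composed scale by scale (cf.\ Lemma~\ref{lem:path-ineq}), and (c) $\Phi_n$ can be evaluated on each gadget $G_{k,j}$. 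Running the sub-multiplicativity across the bands of $H_k$ then yields $c_{H_k}(B_n)\ge \Omega\bigl(\prod_j\rho_{k,j}\bigr)\approx s(n)$, uniformly over all $H_k$ and all embeddings. The delicate point --- and the main obstacle --- is \emph{quantitative sharpness}: the inequality must be tight up to the factor $1-\delta$, with extremizers essentially the embeddings built for the upper bound; this is far more demanding than the merely qualitative statements of Lemmas~\ref{lem:path-ineq}--\ref{lem:torus-ineq}(3), and it is where the bulk of the technical work of~\cite{MN-trees} is concentrated. Granting it, one takes $H=\hat H$ to be the amalgam of $\{H_k\}$, lets $n_0$ absorb every threshold appearing in the asymptotic estimates above, and concludes $(1-\delta)s(n)\le c_H(B_n)\le s(n)$ for all $n\ge n_0$.
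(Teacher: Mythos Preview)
Your plan diverges from the paper's approach in two essential ways, and the second is a genuine gap.

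First, the host construction. The paper does not amalgamate a countable family $\{H_k\}$. For the constant case (Theorem~\ref{thm:dich-trees}) it uses a \emph{single} space $H_\eta=(B_\infty,d_\eta)$, the infinite binary tree with horizontal distances contracted by the factor $\eta=1/B$; the footnote after Fig.~\ref{fig:fork-types} indicates that for Theorem~\ref{thm:dich-binary-trees} one still works with a single host of this type but lets $\eta$ vary with depth (``$\eta$ is no longer a constant''). Your multi-scale gadget amalgam is a different architecture; it is not what the paper does, and it creates the extra burden of proving the lower bound uniformly over \emph{every} $H_k$, including those whose active band is far from $n$.

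Second, and more seriously, the lower-bound mechanism you propose cannot succeed as stated. You ask for a functional $\Phi_n(H)$ satisfying (a) $c_H(B_n)\ge 1/\Phi_n(H)$ and (b) sub-multiplicativity in the spirit of Lemma~\ref{lem:path-ineq}. But any such pair of properties forces a \emph{dichotomy}: either $\Phi_n(H)=1$ for all $n$ (and you get no lower bound at all), or $\Phi_{n_0}(H)<1$ for some $n_0$, whence $c_H(B_{n_0^m})\ge n_0^{\beta m}$ grows polynomially in the depth --- far faster than the target $s(n)=O(\log n/\log\log n)$, and incompatible with your own upper bound $c_{H_k}(B_n)\le s(n)$. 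The whole point of Theorems~\ref{thm:dich-trees} and~\ref{thm:dich-binary-trees} is that tree metrics \emph{escape} the sub-multiplicative machinery of Lemmas~\ref{lem:path-ineq}, \ref{lem:cube-ineq}, \ref{lem:torus-ineq}; invoking that same machinery to establish non-dichotomy is self-defeating. Even if by ``sub-multiplicative across bands'' you mean something weaker, you still have to explain why an arbitrary embedding is forced to pay a nontrivial factor in each band, and nothing in your outline does that.

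The paper's lower bound instead follows {\Matousek}'s fork argument (Lemmas~\ref{lem:Bn-BD-Ramsey}--\ref{lem:B_4-nonembed}): use the vertical bd-Ramsey property of $B_n$ to locate, inside any putative low-distortion embedding $f:B_n\to H_\eta$, a copy of $B_4$ that is $(1+\delta)$-vertically faithful; then classify all possible $\delta$-fork configurations in $H_\eta$ (Figs.~\ref{fig:tip-contract}--\ref{fig:3path-types}) and show by a finite case analysis that such a $B_4$ must contain a fork whose prongs are contracted by at least $1/(\eta+O(\delta))$. No averaging inequality and no sub-multiplicativity enter; the obstruction is combinatorial and local to a bounded-depth subtree. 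The passage from Theorem~\ref{thm:dich-trees} to Theorem~\ref{thm:dich-binary-trees} keeps this template but must carry the fork case analysis through when $\eta$ varies with scale --- precisely the ``more complicated'' part the paper defers to~\cite{MN-trees}.
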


\begin{question}
Is there an extension of Theorem~\ref{thm:dich-binary-trees} to all tree metrics, with dependence on the size of the metric? For example 
``For every $\delta>0$, and $s(n)$ as in Theorem~\ref{thm:dich-binary-trees}, there exists a metric space $H$ such that for every tree metric $T$, 
$(1-\delta) s(\log N) \le D_N(H,\text{trees}) \le s(\log N)$".
\end{question}

Theorem~\ref{thm:dich-trees} is a corollary of Theorem~\ref{thm:dich-binary-trees}, when substituting $s(n)=B$, and using
the fact that the  complete binary trees are ``dense" in the finite tree metrics, in the sense of Prop.~\ref{prop:dense}.
The rest of the section is devoted to a non-quantitative sketch  of the argument in the proof of Theorem~\ref{thm:dich-trees}. 
The more complicated (and complete) proof of Theorem~\ref{thm:dich-binary-trees} can be found in~\cite{MN-trees}.

\begin{figure}[ht]
\centering
\includegraphics[scale=0.6]{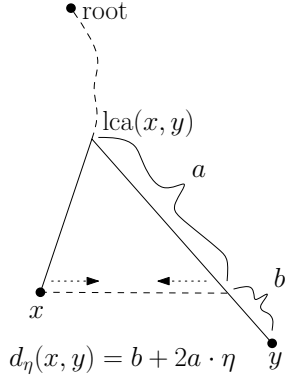}
\caption{The $d_\eta$ metric on $B_\infty$} 
\label{fig:d_eta}
\end{figure}

Denote $\eta=1/B$. To define $H_\eta$, consider 
the infinite binary tree $B_\infty$ with the tree metric on it, and contract the ``horizontal'' distances by a factor of $B$. 
More precisely
Let $h(x)$ be the depth of $x\in B_\infty$, i.e. the distance from the root of $B_\infty$.
Assuming $h(y)\ge h(x)$, the distance
$d_\eta(x,y)$, for $x,y\in B_\infty$ is defined as
\begin{equation} d_\eta(x,y)=h(y)-h(x)+ 2(h(x)-h(\lca(x,y))\cdot \eta. \end{equation}
It is not hard to check that:
\begin{proposition}
\begin{enumerate}
\item $d_\eta$ is a metric on $B_\infty$.
\item $c_{d_\eta}(B_\infty)\le \eta^{-1}$. Indeed the identity mapping does not expand distances, and contracts
them by  factor of at most $1/\eta$.
\end{enumerate}
\end{proposition}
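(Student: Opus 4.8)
The plan is to recognize $d_\eta$ as a non-negative (indeed convex) combination of two standard pseudometrics on $B_\infty$: the tree metric $d_{\mathrm{tree}}(x,y)=h(x)+h(y)-2h(\lca(x,y))$, and the \emph{level} pseudometric $d_{\mathrm{lev}}(x,y)=|h(x)-h(y)|$, which is the pullback of the usual metric on $\mathbb R$ under the depth function $h$ and is therefore automatically a pseudometric. Concretely, I would first check the identity
\[ d_\eta \;=\; \eta\, d_{\mathrm{tree}} \;+\; (1-\eta)\, d_{\mathrm{lev}} . \]
Writing $a=h(x)-h(\lca(x,y))$ and $b=h(y)-h(\lca(x,y))$ with $b\ge a\ge 0$ (the case $h(y)\ge h(x)$ used in the definition), one has $d_{\mathrm{tree}}(x,y)=a+b$, $d_{\mathrm{lev}}(x,y)=b-a$, and $d_\eta(x,y)=(b-a)+2a\eta$, so both sides equal $b-a+2a\eta$. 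This is a one-line computation, and it also makes transparent that the defining formula is symmetric and unambiguous when $h(x)=h(y)$.

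Given the identity, part (1) is immediate: symmetry, vanishing on the diagonal, and the triangle inequality all pass termwise through non-negative linear combinations, so $d_\eta$ is a pseudometric; and since $\eta=1/B\in(0,1)$ (as $B>4$) we have $d_\eta(x,y)\ge \eta\, d_{\mathrm{tree}}(x,y)>0$ whenever $x\ne y$, hence $d_\eta$ is a genuine metric.

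For part (2) I would analyze the identity map $\iota\colon (B_\infty,d_{\mathrm{tree}})\to (B_\infty,d_\eta)$. Since $d_{\mathrm{lev}}\le d_{\mathrm{tree}}$ (because $|a-b|\le a+b$ for $a,b\ge 0$), the decomposition gives $d_\eta\le \eta\,d_{\mathrm{tree}}+(1-\eta)\,d_{\mathrm{tree}}=d_{\mathrm{tree}}$, so $\|\iota\|_{\Lip}\le 1$, i.e. $\iota$ does not expand distances; and dropping the non-negative term $(1-\eta)\,d_{\mathrm{lev}}$ gives $d_\eta\ge \eta\,d_{\mathrm{tree}}$, so $\|\iota^{-1}\|_{\Lip}\le\eta^{-1}$, i.e. $\iota$ contracts distances by a factor of at most $1/\eta$. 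Therefore $\dist(\iota)\le\eta^{-1}$, and (reading $c_{d_\eta}(B_\infty)$, as elsewhere in these notes, as a bound holding uniformly over all finite subtrees) $c_{d_\eta}(B_\infty)\le\eta^{-1}$.

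I expect no real obstacle here: the entire argument is routine once the decomposition $d_\eta=\eta\,d_{\mathrm{tree}}+(1-\eta)\,d_{\mathrm{lev}}$ is spotted. The only thing to notice is that the "horizontal contraction by a factor of $B$" in the definition of $d_\eta$ is precisely the effect of mixing the tree metric with the depth-difference pseudometric; a direct verification of the triangle inequality by case analysis on the relative positions of $x,y,z$ in the tree is possible but considerably more tedious.
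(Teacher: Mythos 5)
Your proof is correct, and the paper offers no proof of its own here (it simply says ``It is not hard to check''), so there is nothing to compare against. The decomposition
\[ d_\eta \;=\; \eta\, d_{\mathrm{tree}} \;+\; (1-\eta)\, d_{\mathrm{lev}} \]
is the clean way to see this: you verified the algebra with $a=h(x)-h(\lca(x,y))$, $b=h(y)-h(\lca(x,y))$, and then symmetry, the triangle inequality, and the two Lipschitz bounds all fall out of the fact that $d_{\mathrm{tree}}$ is a metric, $d_{\mathrm{lev}}$ is a pseudometric (pullback of $|\cdot|$ under $h$), and $0 \le d_{\mathrm{lev}} \le d_{\mathrm{tree}}$. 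This is considerably tidier than a direct case analysis of the triangle inequality over the possible positions of $x,y,z$ relative to each other's subtrees, which is what a naive verification would require. Two small remarks: the hedge about ``reading $c_{d_\eta}(B_\infty)$ as a bound uniform over finite subtrees'' is harmless but unnecessary, since $c_H(X)$ as defined in the paper is an infimum of distortions over maps $X\to H$ and applies verbatim to infinite $X$, with the identity map furnishing a witness; and the positivity $\eta\in(0,1)$ needs only $B>1$, not $B>4$ (the latter enters later, in Theorem~\ref{thm:dich-trees}).
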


Thus $H_\eta=(B_\infty,d_\eta)$ is our candidate host space for proving Theorem~\ref{thm:dich-trees}. We are left to show that
$\lim_{n\to \infty}c_{H_\eta}(B_n)=\eta^{-1}$.
Our approach follows  {\Matousek}'s proof~\cite{Mat-trees} of: 

\begin{theorem} \label{thm:trees-in-l2}
$C_{L_2}(B_n)\ge \Omega(\sqrt{\log n})$.
\end{theorem}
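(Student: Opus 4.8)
The plan is to follow the standard Poincaré-inequality route of Bourgain, using a "short diagonal vs.\ long edges" inequality for the complete binary tree that holds isometrically in $L_2$ but deteriorates for a general embedding. First I would fix an embedding $f:B_n\to L_2$ and look at the structure of $B_n$ as built from $\log n$ levels, thinking of it as a composition of smaller complete binary trees of depth $d$ (for a suitable $d$ we optimize at the end). The key inequality is the \emph{tree Poincaré inequality}: for any $g:B_d\to L_2$, if $\ell, r$ denote the two children of the root and, for a leaf $u$ in the left subtree and its "mirror" leaf $\bar u$ in the right subtree (same address), one has a bound of the form
\begin{equation}
\Avg_{u} \|g(u)-g(\bar u)\|^2 \;\le\; C\, d \sum_{e\in E(B_d)} \|g(e^+)-g(e^-)\|^2 / |E(B_d)|,
\end{equation}
i.e.\ the average squared distance between mirror leaves is at most $O(d)$ times the average squared edge length. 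This is the metric incarnation of the fact that in the tree metric the mirror-leaf distance is $2d\approx$ (number of edges on the path), while a Hilbert-space embedding can only "charge" this quadratically: the path has $2d$ edges but contributes only $\sqrt{2d}$ in the $\ell_2$ sense. I expect this inequality to be provable by a direct second-moment/parallelogram computation over the levels of $B_d$, summing telescoping differences along root-to-leaf paths and using orthogonality-type cancellation (Cauchy--Schwarz in the number of levels), exactly as in \Matousek's argument.

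Next I would bootstrap from depth $d$ to depth $n$ by self-similarity: $B_n$ contains roughly $n/d$ "generations" of vertically stacked copies of $B_d$ (more precisely, for each vertex $v$ at depth a multiple of $d$ we get a copy of $B_d$ hanging below it). Applying the depth-$d$ inequality inside each copy and averaging over all copies and over a random root-to-leaf path of $B_n$, the left-hand sides accumulate additively along the $n/d$ generations while the right-hand side is controlled by the total $\sum_{e}\|f(e^+)-f(e^-)\|^2$ over edges of $B_n$. Concretely, letting $A=\max_e\|f(e^+)-f(e^-)\|$ and using $\|f^{-1}\|_{\Lip}^{-1}\le$ (mirror-leaf distance in image)$/$(mirror-leaf tree distance), the bi-Lipschitz bounds give $\mathrm{dist}(f)^2 \gtrsim \frac{(n/d)\cdot d^2}{(n/d)\cdot d \cdot O(d)} \cdot (\text{something})$; balancing the loss of $O(d)$ per generation against the $n/d$ generations yields the optimal choice $d\approx\log n$-ish, producing $\mathrm{dist}(f)=\Omega(\sqrt{\log n})$ after taking square roots. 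I would organize this as: (i) state and prove the single-scale inequality for $B_d$; (ii) prove a "tensorization/self-improvement" lemma stacking $k$ scales to get a factor $\sqrt{k}$; (iii) optimize $k$ versus the per-scale constant.

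The main obstacle I anticipate is step (i): choosing the right pairing of leaves and the right combinatorial weighting so that the Poincaré-type inequality is both \emph{tight} (captures the true $\sqrt{d}$ gap) and \emph{tensorizes cleanly} across scales without the constants degrading. The naive "mirror leaves across the root" pairing may not telescope well; \Matousek's trick is to pair, at \emph{every} internal vertex $w$ on the path, a leaf in $w$'s left subtree with the corresponding leaf in $w$'s right subtree, and to expand $\|f(u)-f(\bar u)\|^2$ using the midpoint $f(w)$ plus a careful application of the (quasi-)parallelogram law, so that cross terms cancel in expectation. Getting this cancellation to work — and making the subsequent self-similar summation bookkeeping match up so that only a $\sqrt{\log n}$ (and not, say, $\log^{1/4} n$) emerges — is the delicate part; the rest is routine averaging and optimization.
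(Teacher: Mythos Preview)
Your plan is not the paper's argument, and as written it has a real gap.

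\textbf{What the paper actually does.} The proof outlined in the paper is \Matousek's \emph{fork} argument, not a Poincar\'e-type inequality. One defines a $\delta$-fork as a quadruple $(x,y,z,w)$ in which both $(x,y,z)$ and $(x,y,w)$ are $(1+\delta)$-isometric to the three-point path $(0,1,2)$. The single Hilbert-space fact used is that in $L_2$ a $\delta$-fork satisfies $\|z-w\|\le O(\sqrt{\delta})\,\|x-y\|$; this is a direct consequence of the parallelogram identity (uniform convexity with power $2$), and it is the \emph{only} place where the geometry of $L_2$ enters. The combinatorial half is a Ramsey-type statement (Lemma~\ref{lem:Bn-BD-Ramsey}): any $A$-vertically-faithful embedding of $B_n$ into any target contains a copy of $B_2$ which is $(1+\delta)$-vertically-faithful, provided $n\ge n(2,\delta,A)$. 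Inside that copy sits a star whose image is a $\delta$-fork; the fork inequality then forces a contraction of $\Omega(1/\sqrt{\delta})$ on the pair of prongs, contradicting the assumed distortion bound once $\delta\approx 1/\log n$. The $\sqrt{\log n}$ comes from the quantitative Ramsey step (essentially the path dichotomy of Section~\ref{sec:line}), not from any multi-scale tensorization.

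\textbf{Where your plan breaks.} Your step~(i) inequality
\[
\Avg_{u}\,\|g(u)-g(\bar u)\|^2 \;\le\; C\,d\;\Avg_{e\in E(B_d)}\|g(e^+)-g(e^-)\|^2
\]
cannot hold in $L_2$ with this normalization. In $B_d$ there are $\Theta(2^d)$ mirror-leaf pairs and $\Theta(2^d)$ edges, but the two edges at the root lie on \emph{every} mirror path, so any edgewise bound must weight them by $\Theta(2^d)$; converting to a uniform edge average introduces an extra $2^d$ factor, not merely $d$. Conversely, if your inequality did hold, it would immediately give $c_{L_2}(B_d)\ge\Omega(\sqrt d)$ for a single scale (mirror leaves are at tree-distance $2d$, image distance $O(\sqrt d)\cdot\max_e\|\Delta_e g\|$), which is far stronger than the truth and contradicts the known $O(\sqrt{\log d})$ upper bound. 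In short, the ``single-scale'' inequality you wrote is either the trivial Cauchy--Schwarz bound along a path (which uses no $L_2$ structure and yields nothing after correct normalization) or false. What you call ``\Matousek's trick'' --- pairing leaves across every internal vertex and invoking the parallelogram law --- is precisely the fork argument, but it operates on a \emph{single} star with three leaves, not on an averaged Poincar\'e inequality over all leaves; the $\sqrt{\cdot}$ gain lives in the fork contraction $\|z-w\|\le O(\sqrt\delta)\|x-y\|$, and the $\log n$ arises from the Ramsey/pigeonhole step that produces a $\delta$-fork with $\delta\approx 1/\log n$, not from stacking scales.
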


Bourgain~\cite{Bourgain-trees} proved Theorem~\ref{thm:trees-in-l2} first, 
and there are subsequent proofs~\cite{LS-trees,LNP-markov-convex}.
For our purpose, {\Matousek}'s argument seems the most appropriate, 
we therefore outline his proof of Theorem~\ref{thm:trees-in-l2}.

\begin{figure}[ht]
\centering
\includegraphics[scale=0.9]{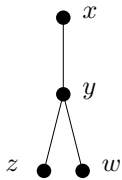}
\caption{A fork $(x,y,z,w)$} 
\label{fig:fork}
\end{figure}

A $\delta$-fork is a quadruple $(x,y,z,w)$ 
such that both $(x,y,z)$, 
and $(x,y,w)$ are $1+\delta$ equivalent to the metric $(0,1,2)$ (where $x$ is mapped to $0$ and $y$ is mapped to $1$). 
It is not hard to see that in Hilbert space (and more generally, $2$-uniform convex spaces), if $(x,y,z,w)$ is a $\delta$-fork then $\|z-w\|\le O(\sqrt{\delta}) \|x-y\| $.

{\Matousek}'s approach is to assume toward a contradiction that there exists a 
Lipschitz embedding $f:B_n \to L_2$ such that $\dist(f)\le c \sqrt{\log n}$, 
and use this assumption to find  a 3-leaf star $(x,y,z,w)$ in $B_n$ whose center is $y$ such that
$(f(x),f(y),f(z),f(w))$ is a $\delta$ fork for $\delta\approx 1/\log n$. 
This implies a large contraction of the distance between $z$ and $w$, which is a contradiction to the assumed upper bound on the distortion.

Consider the first part of {\Matousek}'s proof: Finding a star in $B_n$ whose image is $\delta$-fork. It is proved along the following lines:
Call a metric embedding $f:B_n\to H$, $A$-vertically faithful if $\|f\|_{\Lip}\le 1$, and for every $x,y\in B_n$ in which $x$ is an ancestor of $y$, $d_X(f(x),f(y))\ge d_{B_n}(x,y)/A$.
It turns out (as proved by {\Matousek}) that when considering only the vertical distances in $B_n$, this class has the BD-Ramsey property, or
the dichotomy property. In other words:
\begin{lemma} \label{lem:Bn-BD-Ramsey}
 For every $t\in \mathbb N$, $\delta>0$, and $A>1$, there exists $n=n(t,\delta,A)$, such that
for any host space $H$, and $A$-vertically faithful embedding $f:B_n\to H$,
there exists a subset $C\subset B_n$ which is isometric to $B_t$, and $f(C)$ is $1+\delta$-vertically faithful to $B_t$. 
\end{lemma}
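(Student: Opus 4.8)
The plan is to prove Lemma~\ref{lem:Bn-BD-Ramsey} by a Ramsey-type iterated pigeonhole argument on the tree, combined with a colour-compactness (Rado-type) step. The key observation is that an $A$-vertically faithful embedding $f:B_n\to H$ is nothing but a non-expanding map whose restriction to every root-to-leaf path has distortion at most $A$. So, morally, we want a finite-colouring statement: for each vertex $x$ at depth $d$ in $B_n$, and each descendant $y$, record the ``compressed profile'' of $f$ along the segment from $x$ to $y$ --- i.e. the list of pairwise distances $d_H(f(u),f(v))$ for $u,v$ on that segment, rounded to a finite $\delta$-net of the possible values in $[0,A\cdot d_{B_n}(u,v)]$ after normalising by $d_{B_n}(x,y)$. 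Because all these normalised values lie in a compact set $[1/A,1]$ (up to the triangle inequality), the set of possible $\delta$-rounded profiles of a segment of a fixed combinatorial length is finite.

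First I would set up the combinatorial skeleton: fix $t$, and choose a tower of heights $n_0 = n(t,\delta,A) \gg n_1 \gg \cdots \gg n_t$ where each $n_i$ is large enough to perform one ``pigeonhole pass''. Starting from the root, I find a sub-binary-tree $C_1\subset B_{n}$ of depth $n_1$, isometrically embedded (i.e. a ``dyadic'' copy: pick the root, then at each chosen vertex descend $n_1$ steps to reach the two children of the copy, etc.) on which the compressed profile of every vertical segment of the copy depends only on the combinatorial type of that segment inside $C_1$ --- this is achieved by a Hales--Jewett / Milliken-tree-type Ramsey argument, or more elementarily by iterated two-colourings since $B_n$ branches; the point is that by taking $n$ astronomically large compared to $n_1$ we can monochromatise. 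Iterating $t$ times we get nested copies $C_1 \supset C_2 \supset \cdots \supset C_t$, with $C_t \cong B_t$, on which the compressed vertical profile is completely determined by combinatorial type.

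Then comes the metric extraction step: on $C_t$, the normalised vertical distance $d_H(f(u),f(v))/d_{B_t}(u,v)$ is, up to the $\delta$-rounding, a function $\phi$ of the combinatorial data only. I would argue that $\phi$ must be (almost) constant: by the triangle inequality applied along a segment $u \to w \to v$ of $C_t$, together with $\|f\|_{\Lip}\le 1$, the ``local slopes'' cannot vary --- if some edge-segment of the copy were contracted by noticeably more than $1/A$ while the whole segment has slope at least $1/A$, concatenating yields a contradiction with the lower bound on a longer segment. (This is exactly the rigidity already used in the proof of item~(3) of Lemma~\ref{lem:path-ineq}: once a path is near-geodesic end-to-end, every sub-piece is near-geodesic.) Passing to one more level to absorb the accumulated slack, one concludes that $f$ restricted to $C_t$ is $(1+\delta)$-vertically faithful, after rescaling. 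Since $C_t$ is isometric to $B_t$, this is the desired $C$.

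The main obstacle I expect is the colour-compactness issue that already appeared in the sketch of Theorem~\ref{thm:mat-bd}: the target $H$ is not compact, so the set of ``profiles'' is a priori infinite, and the naive pigeonhole on an infinite colour set fails. The remedy --- as in the discussion after the statement of that theorem --- is to pass first to an auxiliary space $\hat H$ via a Rado-type lemma (\cite[Lemmas 3.4 \& 4.4]{Mat-BD}), so that it suffices to find, for every $\delta$, a copy on which the profiles agree up to $\delta$ rather than exactly; this reduces to a genuinely finite colouring (a $\delta$-net of $[1/A,1]^{\binom{\ell}{2}}$ for segments of length $\ell$) and makes the iterated Ramsey argument legitimate. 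Quantitatively, each pass blows up the required depth by an Ackermann-type amount in the number of colours, so $n(t,\delta,A)$ is a tower of height roughly $t$; since the lemma only asserts existence of some finite $n$, this is acceptable, but it is the reason the bound in Theorem~\ref{thm:trees-in-l2} comes out as $\sqrt{\log n}$ rather than anything larger.
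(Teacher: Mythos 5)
Your overall skeleton --- discretise the normalised vertical ratios, find a "monochromatic" scaled copy of $B_t$ by a Ramsey/pigeonhole pass, and then use path rigidity (item~3 of Lemma~\ref{lem:path-ineq}) to upgrade approximate agreement of the end-to-end ratios to near-isometry of all intermediate ratios --- is the right one, and the extraction step you describe is exactly the role the paper assigns to the path sub-multiplicativity argument. However, a few points of your proposal are off.

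First, the ``colour-compactness'' worry is a red herring and the Rado-type lemma is not needed here. You yourself observe that for an $A$-vertically faithful, $1$-Lipschitz map the quantity $d_H(f(u),f(v))/d_{B_n}(u,v)$ (for $u\prec v$) lies in the fixed compact interval $[1/A,1]$, independently of $H$. So the $\delta$-net of ratios is finite \emph{a priori}, and an honest finite-colour pigeonhole applies. The compactness issue in the sketch of Theorem~\ref{thm:mat-bd} arises when one must first manufacture an embedding of an \emph{infinite} object $\ell_p^n$ into $H$ from embeddings of its finite subsets; no such construction is needed here, because $f$ is given.

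Second, invoking Milliken's strong-tree Ramsey theorem or Hales--Jewett is substantially heavier machinery than what \Matousek\ actually uses, and it misleads you quantitatively. \Matousek's argument (which is what the paper points to with ``the (simple) proof \ldots\ uses the BD-Ramsey property of the path metrics'') is a direct pigeonhole along root-to-leaf paths over the finitely many rounded ratios, reusing the one-dimensional rigidity of near-geodesics; the resulting bound for $n(t,\delta,A)$ is (roughly) single-exponential in the parameters, \emph{not} tower-type. This matters: your closing remark claims the Ackermann-type blow-up ``is the reason the bound in Theorem~\ref{thm:trees-in-l2} comes out as $\sqrt{\log n}$'', but that inference runs the wrong way. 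If the Ramsey depth were a tower of height $t$, plugging into the Hilbert-space $\delta$-fork inequality would only yield a $\log^\ast n$-type distortion lower bound. It is precisely because the depth is only exponential in $\mathrm{poly}(1/\delta,A)$ that, combining with the $2$-uniform-convexity estimate (tip contraction $O(\sqrt{\delta})$), one solves for $D\asymp\sqrt{\log n}$. So the bound you should aim for --- and the one \Matousek\ obtains --- is much better than what Milliken would hand you, and the simpler iterated-pigeonhole route is both necessary for the application and closer to the intended ``simple'' proof.

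Finally, one small inaccuracy: you write that the values $d_H(f(u),f(v))$ lie in $[0,A\cdot d_{B_n}(u,v)]$; with the normalisation used in the paper ($\|f\|_{\Lip}\le 1$ and contraction at most $A$) the correct two-sided bound is $[d_{B_n}(u,v)/A,\ d_{B_n}(u,v)]$. This is what makes the colour set finite without any extra work.
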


Note that for $t=2$, $f(C)$ contains a copy of $\delta$-fork (actually, two copies). We should also mention that,
not surprisingly, the (simple) proof of Lemma~\ref{lem:Bn-BD-Ramsey} uses the BD-Ramsey property of the path metrics as proved in Section~\ref{sec:line}.

Since the part of finding a $\delta$-fork is independent of the range of the embedding, it makes sense to use it on embedding into $(B_\infty,d_\eta)$.
Examining possible $\delta$-forks $(x,y,z,w)$ inside $H_\eta$,
the two configurations in Fig.~\ref{fig:tip-contract} contracts the distance between $z$ and $w$ by at least $1/(O(\delta)+\eta)$ factor, which
is what we are looking for.

\begin{figure}[ht]
\centering
\includegraphics[scale=0.7]{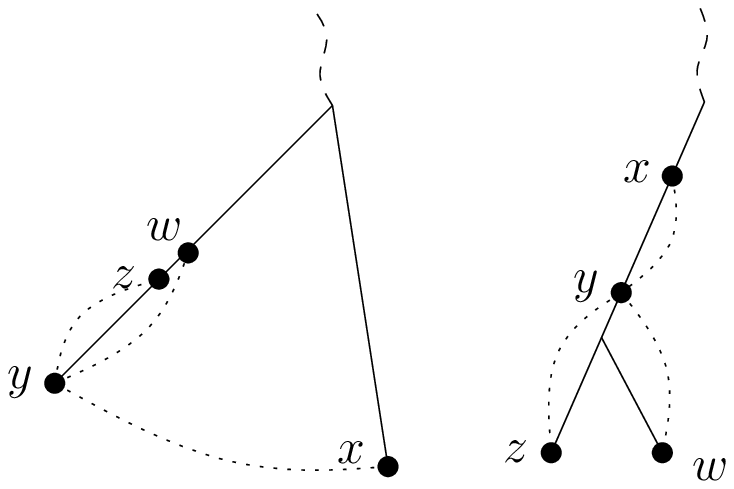}
\caption{\emph{Forks $(x,y,z,w)$ with the distance between the prongs ($z$ and $w$) contracted}. On the right $x$ is an ancestor of the forking point $y$, which is an ancestor to the prongs $z$ and $w$.}
\label{fig:tip-contract}
\end{figure}

However this is not the whole story! There are other types of $\delta$ forks embedded in $H_\eta$. For example type~$II$ in
Fig.~\ref{fig:fork-types}, can be even made 0-fork, but with very small contraction of the tips. This means that the approach that attempts to show large contraction of $\delta$-forks in $H_\eta$ will not work.
There are also other configurations of ``bad" $\delta$-forks, such as type $I$, $III$, and $IV$ in 
Fig.~\ref{fig:fork-types}.%
\footnote{A more careful examination reveals that the configurations labeled type $I$, $III$, $IV$ in Fig.~\ref{fig:fork-types} can be
$O(\eta)$-fork at best. Hence, by taking $\delta\ll \eta$, we can rule out their existence as $\delta$-forks. This approach, however, will fail to prove the more general result of Theorem~\ref{thm:dich-binary-trees}, in which $\eta$ is no longer a constant.}

\begin{figure}[ht]
\centering
\includegraphics[scale=0.7]{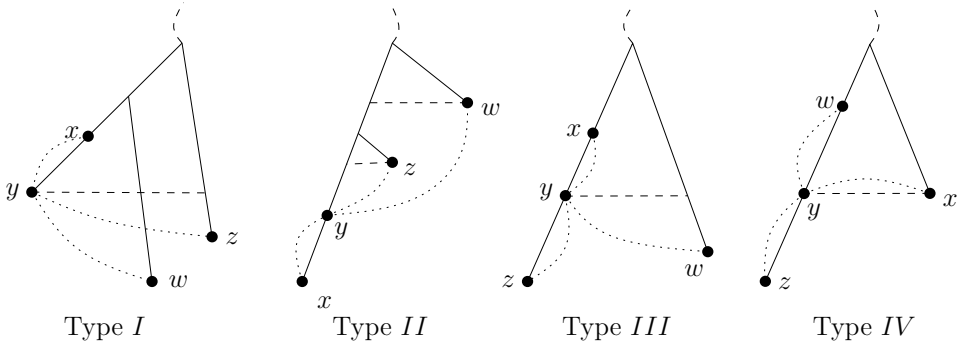}
\caption{\emph{Forks in which the distance between the prongs ($z$ and $w$) do not contract}. In type $II$, $x$ is a descendant of the forking point $y$, which is deeper (in $B_\infty$) than the prongs $z$ and $w$.}
\label{fig:fork-types}
\end{figure}

It turns out that the situation is not that bad. The four types of 
``bad forks" are the only ones that exist. 
\begin{lemma} \label{lem:fork-types}
Every $\delta$-fork in $H_\eta$ is close to one of the 6 types of forks in Figures~\ref{fig:tip-contract} 
and~\ref{fig:fork-types}, up to distortion of $1+O(\delta)$.
\end{lemma}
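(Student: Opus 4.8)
The plan is to carry out an exhaustive case analysis on the combinatorial position in $B_\infty$ of the four points of a $\delta$-fork $(x,y,z,w)$, using only the constraint that $(x,y,z)$ and $(x,y,w)$ are each $(1+\delta)$-equivalent to the three-point metric $(0,1,2)$ under the map sending $x\mapsto 0$, $y\mapsto 1$. First I would record the consequences of this constraint in terms of the explicit formula for $d_\eta$: writing $a=h(\mathrm{lca}(x,y))$, $b=h(\mathrm{lca}(x,z))$, etc., the equalities $d_\eta(x,y)\approx 1$, $d_\eta(y,z)\approx 1$, $d_\eta(x,z)\approx 2$ become (up to additive $O(\delta)$) a small system of linear equations and inequalities in the depths $h(x),h(y),h(z),h(w)$ and the pairwise lca-depths. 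The key structural fact to extract is that $d_\eta(p,q)=1\pm O(\delta)$ forces, up to $O(\delta)$, \emph{either} that one of $p,q$ is an ancestor of the other at vertical distance $\approx 1$ (the ``vertical'' case, since then the second term $2(h(\cdot)-h(\mathrm{lca}))\eta$ vanishes), \emph{or} that the horizontal contribution is non-negligible, which because of the factor $\eta$ costs roughly $1/\eta$ units of tree distance and pins down the depths tightly as well.

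Next I would organize the case split by the relative vertical position of $y$ (the ``center'') with respect to $x$, and then of $z,w$ with respect to $y$. The triangle-type constraints on $(x,y,z)$ already force $z$ to sit on a ``geodesic-like'' continuation past $y$, and similarly for $w$; the only freedom is (a) whether $x$ is an ancestor of $y$ or a descendant of $y$ or off to the side, and (b) whether $z,w$ descend from $y$, ascend from $y$, or branch. Enumerating these yields a short finite list, and I would check that every branch either is impossible (the linear system is infeasible with slack only $O(\delta)$), or forces $(x,y,z,w)$ to be within distortion $1+O(\delta)$ of one of the two ``good'' configurations of Fig.~\ref{fig:tip-contract} or one of the four ``bad'' ones (types I--IV) of Fig.~\ref{fig:fork-types}. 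Because $(x,y,z)$ and $(x,y,w)$ share the pair $(x,y)$, once the $(x,y,z)$ part is classified the $(x,y,w)$ part is classified independently by the same lemma applied with $w$ in place of $z$, and one then merges the two classifications; this is where the ``$6$ types'' count comes from (the admissible pairs of one-prong shapes).

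The main obstacle I expect is bookkeeping rather than depth: there are several sub-cases where two of the four points have comparable depth and the lca structure is genuinely two-parameter, so that ``close up to distortion $1+O(\delta)$'' has to be proved by exhibiting an explicit small perturbation of the depths that lands exactly on the model configuration while changing every pairwise $d_\eta$ by at most a $1+O(\delta)$ factor — and one must check this uniformly, i.e. with the $O(\delta)$ constant independent of $\eta$ and of the scale $h(x)$. A clean way to keep the constants uniform is to first rescale so that $d_\eta(x,y)=1$ exactly, then note all relevant depths are $O(1/\eta)$, and track errors additively through the finitely many linear relations; at the end convert the additive $O(\delta)$ errors back to multiplicative $1+O(\delta)$ distortion using that all the distances involved are bounded below by a constant. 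I would also remark that the degenerate possibility $z=w$, or $z,w$ extremely close, is harmless since it trivially falls under (for instance) a good configuration with contracted prongs.
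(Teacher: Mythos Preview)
Your plan is essentially what the paper indicates: the paper does not actually write out a proof of this lemma, saying only that ``the proof of Lemma~\ref{lem:fork-types} is a tedious and contains long case analysises (not to mention the need to properly define the configurations in Figures~\ref{fig:tip-contract} and~\ref{fig:fork-types})'', with the details deferred to~\cite{MN-trees}. An exhaustive enumeration based on the depths and pairwise $\lca$-depths, driven by the near-equalities $d_\eta(x,y)\approx d_\eta(y,z)\approx 1$, $d_\eta(x,z)\approx 2$ (and the same with $w$), is exactly the intended route.

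Two small corrections to your outline. First, your explanation of the count ``6'' as ``admissible pairs of one-prong shapes, classified independently'' is not quite how the six types arise: the two prongs share the handle $(x,y)$, so the relative position of $x$ and $y$ is fixed once and for all, and the six configurations in Figures~\ref{fig:tip-contract} and~\ref{fig:fork-types} are not a Cartesian product of prong-types but a joint classification (e.g.\ in type~$II$ the handle is inverted relative to the good forks). Second, you insist that the $O(\delta)$ constant be independent of $\eta$; for the purpose of Theorem~\ref{thm:dich-trees} this is unnecessary since $\eta$ is fixed, and the paper's footnote in fact observes that types $I$, $III$, $IV$ can be at best $O(\eta)$-forks, so the fine structure of the classification does interact with $\eta$. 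Uniformity in $\eta$ becomes relevant only for the stronger Theorem~\ref{thm:dich-binary-trees}, whose proof the paper also defers to~\cite{MN-trees}.
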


The proof of Lemma~\ref{lem:fork-types} is a tedious and contains long case analysises (not to mention the need to properly define 
the configurations in Figures~\ref{fig:tip-contract} 
and~\ref{fig:fork-types}). But having it, 
it is reasonable to assume that a slight
generalization of the tip contraction argument for $\delta$-fork would be true in $H_\eta$. Indeed, we show that
\begin{lemma} \label{lem:B_4-nonembed}
Any $1+\delta$ vertically faithful embedding of $B_4$ in $H_\eta$, must
have distortion at least $1/(O(\delta)+\eta)$.
\end{lemma}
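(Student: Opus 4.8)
The plan is to leverage Lemma~\ref{lem:fork-types} as a black box. Let $g:B_4\to H_\eta$ be a $1+\delta$ vertically faithful embedding, normalized so that $\|g\|_{\Lip}\le 1$ and vertical distances contract by at most $1+\delta$. The idea is to locate, inside $B_4$, a $3$-leaf star (a ``combinatorial fork'') $(x,y,z,w)$ whose image is forced by the geometry of $B_4$ to be one of the \emph{tip-contracting} configurations of Figure~\ref{fig:tip-contract} rather than one of the four ``bad'' types of Figure~\ref{fig:fork-types}. Concretely, take $y$ to be a vertex at depth $2$ in $B_4$, let $x$ be its parent (depth $1$), and let $z,w$ be the two children of one of the children of $y$, i.e. descendants of $y$ at depth $4$, lying in the same subtree so that $\lca(z,w)$ is a child of $y$. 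Since $g$ is $1+\delta$ vertically faithful, each of the triples $(x,y,z)$ and $(x,y,w)$ is within $1+O(\delta)$ of a scaled copy of the metric $(0,1,2)$ (after rescaling by the common vertical gap), so $(g(x),g(y),g(z),g(w))$ is an $O(\delta)$-fork in $H_\eta$.

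Next I would appeal to Lemma~\ref{lem:fork-types}: this $O(\delta)$-fork is $1+O(\delta)$-close to one of the six listed types. Here is where the choice of $x,y,z,w$ pays off. In $B_4$ we have $d_{B_4}(x,z)=d_{B_4}(x,w)=3$ while $d_{B_4}(y,z)=d_{B_4}(y,w)=2$ and $d_{B_4}(x,y)=1$; moreover $x$ is an ancestor of $y$ which is an ancestor of both $z$ and $w$ along distinct subtrees. One checks that among the six types, the only ones consistent (up to $1+O(\delta)$) with these four mutual distances — in particular with $x$ genuinely ``above'' the forking point and the prongs genuinely ``below'' it — are the two tip-contracting configurations of Figure~\ref{fig:tip-contract}; the bad types $I$–$IV$ all require either $x$ to be a descendant of $y$ (type $II$) or some other incompatible ancestry/length pattern, which would contradict the vertical faithfulness on the pairs $(x,y)$, $(y,z)$, $(y,w)$. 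Hence $(g(x),g(y),g(z),g(w))$ must be $1+O(\delta)$-close to a tip-contracting fork, and therefore $d_\eta(g(z),g(w))\le (O(\delta)+\eta)\, d_\eta(g(x),g(y))\cdot(\text{bounded factor})$, while $d_{B_4}(z,w)=2\ge \tfrac{2}{3}\,d_{B_4}(x,z)$ is comparable to $d_{B_4}(x,y)$ up to an absolute constant. Comparing the expansion needed on $(z,w)$ against the contraction forced there yields $\dist(g)\ge 1/(O(\delta)+\eta)$.

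The main obstacle I anticipate is the bookkeeping in the previous paragraph: one must verify that the combinatorial star chosen in $B_4$ cannot, after a $1+O(\delta)$ perturbation, be realized as any of the bad fork types, and this requires pinning down the precise definitions of the configurations in Figures~\ref{fig:tip-contract} and~\ref{fig:fork-types} (which the text itself flags as a ``tedious'' point) and tracking how the depth parameters $h(\cdot)$ and the $\eta$-contracted horizontal distances interact. A secondary subtlety is constant management: the $O(\delta)$ slack accumulated when invoking vertical faithfulness on two triples and then Lemma~\ref{lem:fork-types} must stay additive (not multiplicative in a way that swamps $\eta$), so that the final bound reads $1/(O(\delta)+\eta)$ rather than something weaker; this is exactly why the statement is phrased with an $O(\delta)$ inside the denominator rather than a $1+O(\delta)$ prefactor outside. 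Once Lemma~\ref{lem:fork-types} is in hand these are routine, if lengthy, case checks, and I would relegate them to the same omitted-details bin as the proof of Lemma~\ref{lem:fork-types}.
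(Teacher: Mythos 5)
Your proposal has two gaps, one of which is repairable and one of which is the crux of the matter.

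\textbf{The chosen quadruple is not a $\delta$-fork.} With $x$ at depth $1$, $y$ at depth $2$, and $z,w$ descendants of $y$ at depth $4$, the distances are $d_{B_4}(x,y)=1$, $d_{B_4}(y,z)=d_{B_4}(y,w)=2$, $d_{B_4}(x,z)=d_{B_4}(x,w)=3$. The triple $(x,y,z)$ is therefore $1+O(\delta)$-equivalent to the metric $(0,1,3)$, not $(0,1,2)$; the ratio $1:2:3$ cannot be rescaled to $1:1:2$. So Lemma~\ref{lem:fork-types} does not apply to $(g(x),g(y),g(z),g(w))$. To obtain a genuine $\delta$-fork from $B_4$ the ancestor $x$ must sit as far above $y$ as the prongs $z,w$ sit below it --- e.g.\ $x$ the root, $y$ at depth $1$, and $z,w$ the two children of $y$ at depth $2$, or $x$ the root, $y$ at depth $2$, and $z,w$ at depth $4$ in different subtrees of $y$.

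\textbf{A single fork's type cannot be read off from the domain.} This is the deeper problem and it would persist even after fixing the choice of star. You argue that because $x$ is an ancestor of the forking point $y$ and $z,w$ are descendants of it \emph{in $B_4$}, the image fork must be of one of the tip-contracting types. But the fork types of Figures~\ref{fig:tip-contract} and~\ref{fig:fork-types} are configurations \emph{in $B_\infty$}, i.e.\ of $g(x),g(y),g(z),g(w)$; a $1+\delta$ vertically faithful embedding constrains only vertical distances in the domain and says nothing about the ancestry relations of the images. Nothing prevents $g(x)$ from being a descendant of $g(y)$ (type $II$), or the four images from falling into any of types $I$, $III$, $IV$. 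The paper makes this point explicitly: types $I$--$IV$ are ``bad'' precisely because they are admissible images of forks and do not exhibit tip contraction, and this is why the argument cannot be run on one fork in isolation. The paper's proof instead views the vertically faithful image of $B_4$ as several $\delta$-forks glued prong-to-handle, classifies the admissible images of $4$-point paths (Fig.~\ref{fig:3path-types}), and performs a syntactic case analysis of the possible gluings; the rigidity that forces a tip-contracting fork to appear somewhere comes from the interaction between these pieces, not from any single fork. Your proposal skips this global step, and without it the conclusion does not follow.
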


Notice that this lemma is sufficient to prove a lower bound 
on the distortion of $B_n$ in $H_\eta$, by using
Lemma~\ref{lem:Bn-BD-Ramsey} with $t=4$.

In order to prove Lemma~\ref{lem:B_4-nonembed}, we view $1+\delta$ vertically faithful embedding of $B_4$ 
as a collection of $\delta$-forks ``glued" together in prong-to-handle fashion.
For this purpose,
it is helpful to analyze what are the possible configurations of $1+\delta$
embedding of 4-point paths, $\{0,1,2,3\}$, in $H_\eta$. There are 
essentially only three different configurations, as depicted in Fig.~\ref{fig:3path-types}.

\begin{figure}[ht]
\centering
\includegraphics[scale=0.7]{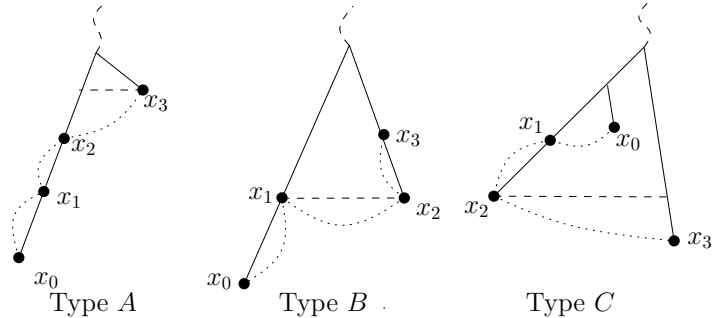}
\caption{\emph{The possible configurations of 4-point path $(x_0,x_1,x_2,x_3)$.} In type $B$, for example, 
$x_1$ is an ancestor of $x_0$,  
$x_3$ is an ancestor of $x_2$, and $x_1$ and $x_2$ have the same depth in $B_\infty$.}
\label{fig:3path-types}
\end{figure}

At this point we can do a ``syntactic" case analysis of how
the $\delta$-forks of Fig.~\ref{fig:fork-types} can be glued together into $1+\delta$ vertically faithful 
embedding of $B_4$, using the ``rules" enforced by the configuration of 4-point paths 
described in Fig.~\ref{fig:3path-types}. Doing this lead to the inevitable conclusion that a fork of a type described in Fig.~\ref{fig:tip-contract}
must appear in the embedding of $B_4$, leading to the conclusion that 
the embedding of $B_4$ must have a large contraction, and hence a large distortion. \hfill \qed

\subsection*{Acknowledgments}
This work was supported by an  Israel Science Foundation (ISF)
grant no.~221/07, and a US-Israel Bi-national Science Foundation (BSF) grant no.~2006009.

The author thanks Assaf Naor for his help in assimilating the subject while collaborating on the papers~\cite{MN-cotype-full,MN-trees}. He also thanks Assaf Naor and the anonymous referee for commenting 
on an earlier version of these notes, which helped improving the presentation.
The figures in these notes are adapted from~\cite{MN-trees}.

Finally, the author wish to thank the organizers of 
the ICMS ``Geometry and Algorithms  workshop", (Edinburgh, 4/2007) and
the organizers of  ``Limits of graphs in group theory and computer science semester" in the Bernoulli Center (Lausanne 5/2007) for inviting him to give a talk on which these notes are based.

\bibliographystyle{plain}
\bibliography{dich}

\begin{thebibliography}{10}

\bibitem{DBLP:conf/soda/2006}
{\em Proceedings of the Seventeenth Annual ACM-SIAM Symposium on Discrete
  Algorithms, SODA 2006, Miami, Florida, USA, January 22-26, 2006}. ACM Press,
  2006.

\bibitem{ALN}
Sanjeev Arora, James~R. Lee, and Assaf Naor.
\newblock Euclidean distortion and the sparsest cut [extended abstract].
\newblock In {\em STOC'05: Proceedings of the 37th Annual ACM Symposium on
  Theory of Computing}, pages 553--562, New York, 2005. ACM.

\bibitem{ALNRRV}
Sanjeev Arora, L{\'a}szl{\'o} Lov{\'a}sz, Ilan Newman, Yuval Rabani, Yuri
  Rabinovich, and Santosh Vempala.
\newblock Local versus global properties of metric spaces.
\newblock In {\em SODA\/} \cite{DBLP:conf/soda/2006}, pages 41--50.

\bibitem{ARV}
Sanjeev Arora, Satish Rao, and Umesh Vazirani.
\newblock Expander flows, geometric embeddings and graph partitioning.
\newblock In {\em Proceedings of the 36th Annual ACM Symposium on Theory of
  Computing}, pages 222--231 (electronic), New York, 2004. ACM.

\bibitem{Bartal-embed}
Yair Bartal.
\newblock Probabilistic approximation of metric spaces and its algorithmic
  applications.
\newblock In {\em 37th Annual Symposium on Foundations of Computer Science
  (Burlington, VT, 1996)}, pages 184--193. IEEE Comput. Soc. Press, Los
  Alamitos, CA, 1996.

\bibitem{BL}
Yoav Benyamini and Joram Lindenstrauss.
\newblock {\em Geometric nonlinear functional analysis. {V}ol. 1}, volume~48 of
  {\em American Mathematical Society Colloquium Publications}.
\newblock American Mathematical Society, Providence, RI, 2000.

\bibitem{Bourgain-embed}
Jean Bourgain.
\newblock On {L}ipschitz embedding of finite metric spaces in {H}ilbert space.
\newblock {\em Israel J. Math.}, 52(1-2):46--52, 1985.

\bibitem{Bourgain-trees}
Jean Bourgain.
\newblock The metrical interpretation of superreflexivity in {B}anach spaces.
\newblock {\em Israel J. Math.}, 56(2):222--230, 1986.

\bibitem{BMW}
Jean Bourgain, Vitali Milman, and Haim Wolfson.
\newblock On type of metric spaces.
\newblock {\em Trans. Amer. Math. Soc.}, 294(1):295--317, 1986.

\bibitem{CKR}
Shuchi Chawla, Anupam Gupta, and Harald R{\"a}cke.
\newblock Embeddings of negative-type metrics and an improved approximation to
  generalized sparsest cut.
\newblock In {\em Proceedings of the Sixteenth Annual ACM-SIAM Symposium on
  Discrete Algorithms}, pages 102--111 (electronic), New York, 2005. ACM.

\bibitem{DZ}
Michel~Marie Deza and Monique Laurent.
\newblock {\em Geometry of cuts and metrics}, volume~15 of {\em Algorithms and
  Combinatorics}.
\newblock Springer-Verlag, Berlin, 1997.

\bibitem{Enflo-cubes}
Per Enflo.
\newblock On the nonexistence of uniform homeomorphisms between
  {$L\sb{p}$}-spaces.
\newblock {\em Ark. Mat.}, 8:103--105 (1969), 1969.

\bibitem{Enflo}
Per Enflo.
\newblock On infinite-dimensional topological groups.
\newblock In {\em S\'eminaire sur la G\'eom\'etrie des Espaces de Banach
  (1977--1978)}, pages Exp. No. 10--11, 11. \'Ecole Polytech., Palaiseau, 1978.

\bibitem{FRT}
Jittat Fakcharoenphol, Satish Rao, and Kunal Talwar.
\newblock A tight bound on approximating arbitrary metrics by tree metrics.
\newblock {\em J. Comput. System Sci.}, 69(3):485--497, 2004.

\bibitem{Feige}
Uriel Feige.
\newblock Approximating the bandwidth via volume respecting embeddings.
\newblock {\em J. Comput. System Sci.}, 60(3):510--539, 2000.

\bibitem{GKR}
Naveen Garg, Goran Konjevod, and R.~Ravi.
\newblock A polylogarithmic approximation algorithm for the group {S}teiner
  tree problem.
\newblock {\em J. Algorithms}, 37(1):66--84, 2000.
\newblock Ninth Annual ACM-SIAM Symposium on Discrete Algorithms (San
  Francisco, CA, 1998).

\bibitem{Kirchheim}
Bernd Kirchheim.
\newblock Rectifiable metric spaces: local structure and regularity of the
  {H}ausdorff measure.
\newblock {\em Proc. Amer. Math. Soc.}, 121(1):113--123, 1994.

\bibitem{Krivine}
Jean-Louis Krivine.
\newblock Sous-espaces de dimension finie des espaces de {B}anach
  r\'eticul\'es.
\newblock {\em Ann. of Math. (2)}, 104(1):1--29, 1976.

\bibitem{Lafforgue}
Vincent Lafforgue.
\newblock Un renforcement de la propriété ({T}), 2007.
\newblock Available at
  \url{http://www.institut.math.jussieu.fr/~vlafforg/Trenforce.pdf}.

\bibitem{LNP-markov-convex}
James~R. Lee, Assaf Naor, and Yuval Peres.
\newblock Trees and {M}arkov convexity.
\newblock In {\em SODA\/} \cite{DBLP:conf/soda/2006}, pages 1028--1037.

\bibitem{LLR}
Nathan Linial, Eran London, and Yuri Rabinovich.
\newblock The geometry of graphs and some of its algorithmic applications.
\newblock {\em Combinatorica}, 15(2):215--245, 1995.

\bibitem{LS-trees}
Nathan Linial and Michael Saks.
\newblock The {E}uclidean distortion of complete binary trees.
\newblock {\em Discrete Comput. Geom.}, 29(1):19--21, 2003.

\bibitem{Mat-BD}
Ji{\v{r}}{\'{\i}} Matou{\v{s}}ek.
\newblock {Ramsey-like properties for bi-Lipschitz mappings of finite metric
  spaces}.
\newblock {\em Comment. Math. Univ. Carolin}, 33(3):451--463, 1992.

\bibitem{Mat-trees}
Ji{\v{r}}{\'{\i}} Matou{\v{s}}ek.
\newblock On embedding trees into uniformly convex {B}anach spaces.
\newblock {\em Israel J. Math.}, 114:221--237, 1999.

\bibitem{Mat-Discrete-Geometry}
Ji{\v{r}}{\'{\i}} Matou{\v{s}}ek.
\newblock {\em Lectures on discrete geometry}, volume 212 of {\em Graduate
  Texts in Mathematics}.
\newblock Springer-Verlag, New York, 2002.

\bibitem{Mat-lowdim}
Ji{\v{r}}{\'{\i}} Matou\v{s}ek.
\newblock Bi-{L}ipschitz embeddings into low-dimensional euclidean spaces.
\newblock {\em Comment. Math. Univ. Carolinae}, 31:589--600, 1990.

\bibitem{Maurey-survey}
Bernard Maurey.
\newblock Type, cotype and {$K$}-convexity.
\newblock In {\em Handbook of the geometry of Banach spaces, Vol.\ 2}, pages
  1299--1332. North-Holland, Amsterdam, 2003.

\bibitem{MP-cotype-oo}
Bernard Maurey and Gilles Pisier.
\newblock Un th\'eor\`eme d'extrapolation et ses cons\'equences.
\newblock {\em C. R. Acad. Sci. Paris S\'er. A-B}, 277:A39--A42, 1973.

\bibitem{MP-type-cotype}
Bernard Maurey and Gilles Pisier.
\newblock S\'eries de variables al\'eatoires vectorielles ind\'ependantes et
  propri\'et\'es g\'eom\'etriques des espaces de {B}anach.
\newblock {\em Studia Math.}, 58(1):45--90, 1976.

\bibitem{MN-cotype}
Manor Mendel and Assaf Naor.
\newblock Metric cotype.
\newblock In {\em SODA\/} \cite{DBLP:conf/soda/2006}, pages 79--88.

\bibitem{MN-type}
Manor Mendel and Assaf Naor.
\newblock Scaled {E}nflo type is equivalent to {R}ademacher type.
\newblock {\em Bull. London Math. Soc.}, 39(3):493--498, 2007.

\bibitem{MN-trees}
Manor Mendel and Assaf Naor.
\newblock {M}arkov convexity and local rigidity of distorted metrics.
\newblock In {\em Proceedings of the 24th Annual ACM Symposium on Computational
  Geometry}, 2008.

\bibitem{MN-cotype-full}
Manor Mendel and Assaf Naor.
\newblock Metric cotype.
\newblock {\em Ann. of Math.}, to appear.
\newblock arXiv:math/0506201v3.

\bibitem{MS}
Vitali~D. Milman and Gideon Schechtman.
\newblock {\em Asymptotic theory of finite-dimensional normed spaces}, volume
  1200 of {\em Lecture Notes in Mathematics}.
\newblock Springer-Verlag, Berlin, 1986.
\newblock With an appendix by M. Gromov.

\bibitem{Pisier-type-1}
Gilles Pisier.
\newblock Sur les espaces de {B}anach qui ne contiennent pas uniform\'ement de
  {$l\sp{1}\sb{n}$}.
\newblock {\em C. R. Acad. Sci. Paris S\'er. A-B}, 277:A991--A994, 1973.

\bibitem{Pisier-type}
Gilles Pisier.
\newblock Probabilistic methods in the geometry of {B}anach spaces.
\newblock In {\em Probability and analysis (Varenna, 1985)}, volume 1206 of
  {\em Lecture Notes in Math.}, pages 167--241. Springer, Berlin, 1986.

\bibitem{T-J}
Nicole Tomczak-Jaegermann.
\newblock {\em Banach-{M}azur distances and finite-dimensional operator
  ideals}, volume~38 of {\em Pitman Monographs and Surveys in Pure and Applied
  Mathematics}.
\newblock Longman Scientific \& Technical, Harlow, 1989.

\end{thebibliography}
\end{document}